\newtheorem{theorem}{Theorem}[section]
\newtheorem{proposition}[theorem]{Proposition} 
\newtheorem{lemma}[theorem]{Lemma}
\newtheorem{definition}[theorem]{Definition} 
\theoremstyle{definition}
\newtheorem{remark}[theorem]{Remark}  
\numberwithin{equation}{section} 
\newcommand \be           {\begin{equation}}
\newcommand \ee            {\end{equation}}
\newcommand \Ecal           {\mathcal{E}}
\newcommand \RR           {\mathbb{R}}
\newcommand \NN           {\mathbb{N}}
\newcommand \del           \partial
\newcommand \eps            \epsilon
\newcommand{\ve}{\varepsilon}
\newcommand{\vp}{\varphi}
\newcommand{\ess}{\,{\rm ess}}
\newcommand \WW     {\mathcal{W}}
\DeclareMathOperator    \sgn {sgn}
\DeclareMathOperator    \TV  {TV} 
\DeclareMathOperator\dive {div}
\newcommand{\benum}{\begin{enumerate}}
\newcommand{\eenum}{\end{enumerate}}
\newcommand \ws     {\mathrel{\mathop{\rightharpoonup}\limits^{*}}}
\def\XXint#1#2#3{{\setbox0=\hbox{$#1{#2#3}{\int}$}
\vcenter{\hbox{$#2#3$}}\kern-.5\wd0}}
\newcommand{\loc}{\mathrm{loc}}
\begin{document}

\title{The obstacle\,-\,mass constraint problem for \\
hyperbolic conservation laws. Solvability}

\author{Paulo Amorim$^{1,3}$, Wladimir Neves$^1$, and Jos\'e Francisco Rodrigues$^2$}

\date{}
\maketitle

\footnotetext[1]{Instituto de Matem\'atica, Universidade Federal do Rio de Janeiro, Av. Athos da Silveira Ramos 149,
Centro de Tecnologia - Bloco C,
Cidade Universit\'aria - Ilha do Fund\~ao,
Caixa Postal 68530, 21941-909 Rio de Janeiro,
RJ - Brasil.}

\footnotetext[2]{
CMAF+IO, F\_Ciências,
Universidade de Lisboa,
P-1749-016 Lisboa, Portugal.
}

\footnotetext[3]{Corresponding author. E-mail: {paulo@im.ufrj.br}}

\begin{abstract} 
In this work we introduce the obstacle-mass constraint problem for a multidimensional scalar hyperbolic conservation law.
We prove existence of an entropy solution to this problem by a penalization/viscosity method. The mass constraint introduces a nonlocal Lagrange multiplier in the penalized equation, giving rise to a nonlocal parabolic problem. We introduce a compatibility condition relating the initial datum and the obstacle function which ensures global in time existence of solution. This is not a smoothness condition, but relates to the propagation of the support of the initial datum. 

\smallskip
\noindent
\textit{AMS Subject Classification.} {Primary: 35L65. Secondary: 35R35}

\smallskip
\noindent
\textit{Key words and phrases.} Hyperbolic conservation law; Obstacle problem; Mass conservation; Nonlocal parabolic equation; Free boundary problem.
\end{abstract}



\section{Introduction}

We consider the Cauchy problem for a hyperbolic conservation law on $(t,x) \in(0,T)\times\RR^d$,
\be
\label{010}
\aligned
&H(u) \equiv \del_t u + \dive f(u) = 0,
\\
& u(0,x)= u_0(x), \qquad x \in \RR^d,
\endaligned
\ee
under both restrictions 
\be
\label{011}
\aligned
0\le u(t,x) \le \theta(t,x)
\endaligned
\ee
and
\be
\label{012}
\aligned
\int_{\RR^d} u(t,x) \,dx = 1, \quad t\ge 0.
\endaligned
\ee
Here, $\theta(t,x)$ is a given obstacle function, $f$ is the flux function which is supposed smooth, and the Cauchy datum $u_0$ is such that $0 \le u_0(x) \le \theta(0,x),$ with
$\int_{\RR^d} u_0(x) \,dx = 1$. In all that follows, every solution $u$ of the various problems we will consider will be nonnegative, this being a consequence of 
the non negativeness of the initial datum and
the properties of the 
operator $H$.

Even without the mass constraint \eqref{012}, some sense must be given to the hyperbolic problem \eqref{010} under the obstacle constraint $u \le \theta$. This was done mainly by L\'evi in a series of works \cite{Levi3,Levi1,Levi2}, in the case of a Dirichlet problem, in which a viscous approximation was introduced with a penalization term enforcing a constraint of type $u\le \theta$.

One way to understand L\'evi's approach is to observe that (formally at least) a solution $u$ to the obstacle problem $H(u)=0$, $u\le \theta$ actually corresponds to a couple $(u,\mu)$ verifying
\be
\label{015}
\aligned
\del_t u + \dive f(u) = \mu,
\endaligned
\ee
with
\[
\aligned
\mu = - H(\theta)^- \chi_{\{u=\theta\}},
\endaligned
\]
where we define the positive and negative parts as $v^+:= \ess \sup \{v, 0 \}$, $v^-= (-v)^+$,
and $H$ is the operator defined in~\eqref{010}. The motivation for the above 
equation can be found in Remark 4.2 in \cite{JFR2} for the linear case.
In fact, equation \eqref{015} means that $u$ must solve the equation $H(u) =0$ wherever $u$ does not coincide with $\theta$. On the other hand, on the coincidence set $\{ u= \theta\}$ where the Lagrange multiplier $\mu$ is active, formally one has $H(\theta) = - H^-(\theta)$, which is to say, $H(\theta)\le 0$. 

However, such a solution, while verifying $u\le\theta$, does not conserve mass. This reduces the applicability of that approach to problems where mass conservation is important, such as in porous media models with saturation arising in petroleum engineering and crowd or traffic dynamics (see, however, \cite{BB} for an application of an obstacle problem enforcing mass loss). More examples of domains where hyperbolic obstacle problems may be applicable can be found in \cite{Oil} and the references in \cite{Levi1}. Other references on hyperbolic obstacle problems include \cite{Berthelin,Puel,JFR1,JFR2}, although we could cite many others. 
For an introduction to classical obstacle problems, we address the reader to the book of Kinderlehrer and Stampacchia \cite{DKGS}, and also Rodrigues 
 \cite{JFRBook}.


It is clear that a solution to \eqref{010}--\eqref{012}, taken in a na\" ive sense, may not exist if the obstacle $\theta$ is reached. Indeed, in that case, there are two mutually exclusive effects taking place: on the one hand, the evolution equation $H(u) =0$ naturally conserves the total mass; on the other hand, the presence of the obstacle leads to mass loss. In this work we propose a mechanism designed to reconcile these two contradictory aspects.
One classical way in which an integral constraint like the unit integral condition in \eqref{012} may be enforced, is to introduce a Lagrange multiplier into the equation \eqref{010}, see for instance Caffarelli and Lin \cite{CaffLin} for a related problem. Taking this approach here, our problem without obstacle may be posed as follows: we look for a pair $(u,\lambda)$, with $\lambda(t)$ a function of $t$ alone, such that $u$ and $\lambda$ satisfy 
\be
\label{10}
\aligned
&\del_t u + \dive f(u) = \lambda(t) u,
\\
& u(0,x)= u_0(x),
\endaligned
\ee
where the Lagrange multiplier $\lambda(t)$ ensures that
\[
\aligned
\int_{\RR^d} u(t,x) \,dx = 1, \quad t\ge 0.
\endaligned
\]
To our knowledge, this procedure is completely new for scalar conservation laws. Moreover, we require that $u$ satisfy both restrictions \eqref{011} and \eqref{012}.
Thus, even while respecting the obstacle condition, the solution 
$u$ conserves the total mass, which is physically relevant for real applications.  

Formally, and in agreement with \eqref{015} and \eqref{10}, the solution $u$ of \eqref{010}--\eqref{012} should verify 
\be
\label{15}
\aligned
\del_t u + \dive f(u) = -H(\theta)^- \chi_{\{u=\theta\}} + u \int_{\RR^d} H(\theta)^- \chi_{\{u=\theta\}} \,dx.
\endaligned
\ee
In fact, setting $\lambda(t) = \int_{\RR^d} H(\theta)^- \chi_{\{u=\theta\}} \,dx$ and integrating \eqref{15} on $\RR^d$ one finds formally
\[
\aligned
\frac d{dt}\int_{\RR^d}  u \,dx =  \lambda(t) \Big(\int_{\RR^d} u \,dx -1 \Big),
\endaligned
\]
which yields the conservation of mass $\int_{\RR^d} u \,dx =1$ for $t>0$, as long as $\int_{\RR^d} u_0 \,dx =1$. 

The main goal in this work is to give a precise meaning to the above formal reasoning, by obtaining an entropy solution to \eqref{010}--\eqref{012} (and thus \eqref{15}) defined in an appropriate sense.
For that, we will introduce a nonlocal parabolic equation containing a penalization term to enforce the constraint $u\le \theta$ (as in \cite{Levi1}), and a new, nonlocal Lagrange multiplier term designed to enforce the mass constraint \eqref{012}. As we will see below, this is not trivial to achieve. The first problem which arises is the lack of global in time existence for a possible solution of the problem \eqref{010}--\eqref{012}. This is explained in more detail below, and is linked to the possibility that the support of the solution may find itself in a region where the integral of $\theta$ is too small. In this way, it is obviously impossible to satisfy both conditions \eqref{011} and \eqref{012} simultaneously.

One can say  that such a situation reflects a lack of compatibility between the solution and the obstacle function. This problem is solved by defining an appropriate notion of \emph{compatibility} between the obstacle $\theta$ and the initial datum $u_0$ (see Definition~\ref{COMP} below). Crucially, this notion of compatibility is sufficient to obtain global-in-time existence of an entropy solution to the problem \eqref{010}--\eqref{012}.



The uniqueness of solution is not established here. Nevertheless, we conjecture that a wellposedness 
property is valid. Note that in \cite{Levi1}, the uniqueness property is a delicate part of that paper, 
as is usual in the theory of hyperbolic conservation laws. The difficulty in reproducing usual uniqueness arguments (Kruzkov's doubling of variables) is mainly due to the fact that a solution to \eqref{010}--\eqref{012} actually consists of a pair $(u,\lambda)$ (see Definition~\ref{DEFSOL} below). Note, however, that in order to obtain our existence result, a careful and involved study of a nonlocal parabolic problem is necessary, requiring in particular new assumptions on the data and delicate estimates. However, our method does not give an explicit or clear dependence of $\lambda(t)$ with respect to $u$.  
For these reasons we chose to leave for future work the interesting question of wellposedness.

Finally, it would be interesting to determine whether the methods in our paper can be extended to deal with more general (e.g., time dependent) mass constraints, hyperbolic systems 
of conservation laws, etc.
Also, it would be of great interest for physical applications (even under smoothness assumptions, to keep the analysis less involved) to extend the results of this work to a general conservation law 
with space and time dependent flux function and source term.

An outline of the paper follows.
In Section~\ref{S3000} we analyze the nonlocal parabolic problem which approximates the full problem \eqref{010}--\eqref{012}. The analysis is based on a fixed point argument. We present some details, since in the a priori estimates one must be careful due to the presence of the penalization and, especially, the nonlocal term.
Next, in Section~\ref{S4000} we provide key uniform estimates for the approximate problem.
They will allow us in Section~\ref{S5000} to pass to the limit on the penalized nonlocal parabolic equation to obtain a solution of \eqref{010}--\eqref{012}. Finally, in the Appendix, we provide a proof of two crucial lemmas, used to prove the uniform estimates of Section~\ref{S4000}.

\subsection{Smoothness assumptions on the data}

The initial datum $u_0$ is taken in the space $(L^\infty\cap L^1)(\RR^d).$
In fact, to simplify the exposition, we also consider that $u_0$ has bounded variation, that is, $u_0 \in BV(\RR^d)$. We suppose that the initial datum has unit mass, so
\be
\label{unit}
\aligned
\int_{\RR^d} u_0(x) \,dx =1.
\endaligned
\ee

The flux function $f$ is taken in $(C^2(\RR))^d$, and
without loss of generality we assume that  $f(0) =0$. Also, we suppose that
\be
\label{MM}
\aligned
\| f' \|_{(L^\infty(\RR))^d} \le M, \qquad  \| f'' \|_{(L^\infty(\RR))^d}  \le M'.
\endaligned
\ee
Note that in the subsequent analysis we will eventually prove an $L^\infty$ bound (for each time $T$) on the solution $u$, Therefore, the condition \eqref{MM} can then be relaxed in a standard way to
\[
\aligned
\sup_{|v| \le L }| f'(v) | \le M, \qquad  \sup_{|v| \le L }| f''(v) | \le M',
\endaligned
\]
where $M, M'$ may depend on $L$.

The obstacle $\theta(t,x): [0,+\infty) \times \RR^d \to \RR$ is assumed to satisfy the following conditions: 
\be
\label{50.2}
\aligned
\text{There exists a constant $\underline{\theta} >0$ such that }
 \theta(t,x) \ge \underline{\theta}, \qquad a.e. \quad(t,x),
\endaligned
\ee
\be
\label{50}
\aligned
\| \nabla_{t,x}  \theta \|_{ W^{2,1}([0,+\infty) \times \RR^d)} \le C_\theta,
\endaligned
\ee
\be
\label{50.1}
\aligned
\text{and for each compact set $K$, the function } t\mapsto \int_{K} \theta(t,x) \,dx \quad 
\text{is continuous.} 
\endaligned
\ee
Note that the obstacle $\theta(t,x)$ is not required to be bounded or continuous, but only bounded away from zero. Also, \eqref{50} is a condition on derivatives of order 1, 2 and 3 of $\theta$, but not on the function $\theta$ itself, which is not integrable.

\subsection{Entropy solutions to the obstacle-mass constraint problem}
Here we recall some standard facts and terminology from hyperbolic conservation laws. We refer the reader to the books \cite{GR} and \cite{Dafermos} for further reference on hyperbolic conservation laws.
\begin{definition}
A function $\eta \in C^1(\RR)$ is called an entropy for equation \eqref{010}, with associated entropy 
flux $q \in C^1(\RR; \RR^d)$, when for each $u \in \RR$, 
\begin{equation}
  q'_j(u)= \eta'(u) f'_j(u), \qquad (j= 1, \ldots, d).
\end{equation}
Also, we call $F(u)= (\eta(u), q(u))$ an entropy pair, and if $\eta$ is convex we say that $F(u)$ is a convex entropy 
pair. Moreover, $F(u)$  is called a generalized entropy pair if it is the uniform limit of a family of entropy pairs 
over compact sets. 
\end{definition}

The Kruzkov entropies are the most important example of generalized convex entropy pairs, consisting of the following parametrized family 
\be
\label{Kr}
   F(u,v)= ( |u-v|, \sgn (u-v) (f(u) - f(v)), \qquad v \in \RR. 
\ee
Next, extending the definition in \cite{Levi1}, we present in which 
sense a function $u(t,x)$ is a weak entropy solution of \eqref{010}--\eqref{012}.
 
\begin{definition}
\label{DEFSOL} Let $\theta: [0,+\infty) \times \RR^d \to \RR$ be a function which is called the obstacle, verifying the conditions in \eqref{50}--\eqref{50.1}. 
Let $u_0 \in (L^1 \cap L^\infty \cap BV)(\RR^d)$ with $0\le u_0(x) \le \theta(0,x)$  a.e., and $\int_{\RR^d} u_0 \, dx = 1$.
A pair $(u,\lambda)$ is called an obstacle mass conserving weak entropy solution of the Cauchy problem \eqref{010}--\eqref{012} if for any $T>0$:

$(i)$ The function $u$ is in $L^\infty  ((0,T) \times \RR^d)$ with $u(t,\cdot) \in BV(\RR^d)$ for a.a. $t \in [0,T]$, and the Lagrange multiplier $\lambda$ is in $L^\infty(0,T; \RR^+).$

\smallskip 
$(ii)$ For each nonnegative test function $\varphi \in C^\infty_c((-\infty,T) \times \RR^d)$, and any $k \in [0,1]$
\begin{equation}
\label{CL2}
\begin{aligned}
  \int_0^T\!\!  \int_{\RR^d}  &F(u(t,x), k \, \theta(t,x)) \cdot \nabla_{t,x} \varphi(t,x) \, dxdt
  \\[5pt] 
  &+ \int_0^T\!\! \int_{\RR^d}  \Big( \lambda(t) \, u(t,x) - H(k \, \theta(t,x)) \Big) 
  \sgn(u(t,x) - k \, \theta(t,x)) \, \varphi(t,x) \, dxdt
  \\[5pt]
  &+ \int_{\RR^d} | u_0(x) - k \,  \theta(0,x) | \, \vp(0,x) \, dx \geq 0.
\end{aligned}
\end{equation} 

 \smallskip
 $(iii)$ For almost all $(t,x)  \in (0,T) \times  \RR^d,$ $ \int_{\RR^d} u(t) \, dx= 1$ and $u(t,x) \le \theta(t,x)$. 
  
 \end{definition}

One observes that, as a consequence of Definition~\ref{DEFSOL}, the initial condition is assumed in the $L^1(\RR^d)$ strong sense (see \cite{ChenRascle,Panov1,Panov2}): 
 \begin{equation}
 \label{IC2}
    {\rm ess}\lim_{t \to 0} \int_{\RR^d} | u(t,x) - u_0(x) | \, dx= 0.
 \end{equation}

\subsection{Necessary and sufficient conditions for global-in-time existence: a compatibility condition}
\label{SS}

According to Definition~\ref{DEFSOL}, a solution $u$ to the obstacle-mass constraint problem must satisfy $0\le u \le \theta$ for almost all $t,x$. Then, it is obvious that if for some $t>0$ we have 
\[
\aligned
\int_{\{u(t) > 0\}} \theta (t,x) \,dx  < 1,
\endaligned
\]
then
\[
\aligned
\int_{\RR^d} u(t,x) \,dx = \int_{\{u(t) > 0\}} u(t,x) \,dx \le \int_{\{u(t) > 0\}} \theta (t,x) \,dx  < 1,
\endaligned
\]
which is in contradiction to the unit mass property (the last point in Definition~\ref{DEFSOL}). 

Thus, we see that a necessary condition for global-in-time existence is that 
the integral of the obstacle function $\theta$ on the support of the solution $u$ remain greater than one, 
that is to say, for almost all $(t,x)  \in (0,T) \times  \RR^d,$
\be
\label{1.000}
\aligned
\int_{\{u(t) > 0\}} \theta(t,x) \,dx \ge 1.
\endaligned
\ee


Of course the property \eqref{1.000} depends on the solution itself. To find a sufficient condition for global existence, we must find a condition on the initial datum $u_0$ and on the obstacle $\theta$ only, ensuring that a property like \eqref{1.000} remains valid for arbitrary times $T>0$. To this end, we now introduce the notion of \emph{compatible} initial datum and obstacle.

Define $v(t,x)$ as the unique entropy solution to the Cauchy problem for the conservation law on $[0,T] \times \RR^d$ 
\be
\label{403}
\aligned
&\del_t v + \dive f(v) =0,
\\
& v(0,x) = v_{0}(x),
\endaligned
\ee
where $v_0 \in (L^\infty\cap L^1\cap BV)(\RR^d)$. Recall from \cite{GR} that $v \in C(0,T; L^1(\RR^d))$.

\begin{definition}
\label{COMP}
Let $u_0$ be an initial datum and $\theta$ an obstacle verifying the assumptions in Definition~\ref{DEFSOL}. We say that $u_0$ and $\theta$ are \textbf{compatible} if there exists a function $v_0(x)  \in (L^\infty\cap L^1\cap BV)(\RR^d)$  
with the following properties:
\benum
\item[(i)]
 $v_0(x) \le \min( u_0(x), \underline{\theta})$, where $\underline{\theta}$ is the lower bound on the obstacle, given in \eqref{50.2};

\item[(ii)]
 For some $\beta >0$,
\be
\label{1.200}
\aligned
1+ \beta \le \int_{\{ v>0 \}} \theta (t,x) \,dx \le +\infty,
\endaligned
\ee
where $v(t,x)$ is given by \eqref{403}.
\eenum
\end{definition} 

We now show that, there exists an important special case, where one may ensure that $u_0$ and $\theta$ are compatible in the sense of Definition~\ref{COMP}.
\begin{proposition}
Suppose that for each compact $K \subset \RR^d$ there is a constant $c_K>0$ such that $ u_0 (x) \ge c_K$, $ x \in K$.
Then, $u_0$ and $\theta$ are compatible in the sense of Definition~\ref{COMP}.
\end{proposition}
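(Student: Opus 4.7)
The plan is to construct $v_0$ as a small multiple of the indicator of a large ball, using the pointwise lower bound on $u_0$ to enforce item (i) of Definition~\ref{COMP}, and then to exploit mass conservation together with the maximum principle for the pure conservation law \eqref{403} to show that the Lebesgue measure of $\{v(t,\cdot)>0\}$ cannot collapse in time. Combined with the uniform lower bound $\theta\ge\underline{\theta}$ from \eqref{50.2}, this will deliver item (ii).

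Concretely, I would fix a radius $R>0$ (to be chosen large at the end), set $K:=\overline{B_R(0)}$, and use the hypothesis to obtain a constant $c_K>0$ with $u_0\ge c_K$ on $K$. Define $\varepsilon:=\min(c_K,\underline{\theta})$ and
\[
v_0(x) := \varepsilon\,\one_{B_R(0)}(x).
\]
Then $v_0\in (L^\infty\cap L^1\cap BV)(\RR^d)$ and $v_0\le \min(u_0,\underline{\theta})$ a.e., so item (i) of Definition~\ref{COMP} holds.

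Let $v$ be the entropy solution of \eqref{403} with datum $v_0$. Since $v_0\ge 0$ and $f(0)=0$, the maximum principle for scalar conservation laws gives $0\le v(t,x)\le \varepsilon$ a.e., while the conservation of mass yields $\int_{\RR^d} v(t,x)\,dx = \varepsilon\,|B_R|$ for every $t\ge 0$. The elementary chain of inequalities
\[
\varepsilon\,|B_R| = \int_{\{v(t)>0\}} v(t,x)\,dx \le \|v(t)\|_{L^\infty}\,\bigl|\{v(t,\cdot)>0\}\bigr|\le \varepsilon\,\bigl|\{v(t,\cdot)>0\}\bigr|
\]
then forces $|\{v(t,\cdot)>0\}|\ge |B_R|$ for all $t\ge 0$. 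Using $\theta\ge\underline{\theta}$ on $[0,+\infty)\times\RR^d$, this implies
\[
\int_{\{v(t)>0\}}\theta(t,x)\,dx\ge \underline{\theta}\,|B_R|\qquad \text{for all } t\ge 0.
\]
It now suffices to choose $R$ so large that $\underline{\theta}\,|B_R|\ge 2$, and to set $\beta:=\underline{\theta}\,|B_R|-1>0$; item (ii) of Definition~\ref{COMP} then holds.

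The only delicate point in this sketch is showing that the positivity set of $v(t,\cdot)$ cannot shrink under the flow, since entropy solutions of scalar conservation laws can develop shocks and compress support. The key is the pigeonhole-type bound $|\{v>0\}|\ge \|v\|_{L^1}/\|v\|_{L^\infty}$, combined with the fact that for \eqref{403} both of these norms are time-invariant; this yields a lower bound on $|\{v(t,\cdot)>0\}|$ that is uniform in $t$, which is precisely what is needed to propagate the compatibility condition for all times.
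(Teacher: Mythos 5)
Your proof is correct, and it takes a genuinely different route from the paper's. The paper chooses $v_0 := \min(u_0,\gamma)$ with $0<\gamma<\underline{\theta}$ and then shows, via finite speed of propagation and comparison of the entropy solution with constant solutions, that $v(t,\cdot)$ stays strictly positive on every ball, so that $\{v(t)>0\}=\RR^d$ for all $t$ and \eqref{1.200} holds with $\int_{\{v>0\}}\theta=+\infty$. You instead truncate $u_0$ to a fixed ball $B_R$, and replace the domain-of-dependence argument by the ``pigeonhole'' inequality $|\{v(t)>0\}|\ge \|v(t)\|_{L^1}/\|v(t)\|_{L^\infty}$, together with mass conservation and the $L^\infty$ maximum principle; this yields the weaker-looking but entirely sufficient bound $|\{v(t,\cdot)>0\}|\ge|B_R|$, which combined with $\theta\ge\underline{\theta}$ gives \eqref{1.200} once $R$ is large. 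Your argument is arguably more elementary (it needs only conservation of $L^1$ mass and the $L^\infty$ comparison principle, not finite propagation speed), and the crucial observation that the measure bound is uniform in $\varepsilon=\min(c_K,\underline{\theta})$ — so that the shrinking of $c_K$ with $R$ causes no trouble — is handled correctly since the final choice of $R$ depends only on $\underline{\theta}$. The paper's argument is slightly stronger in that it identifies $\{v(t)>0\}$ exactly, but that extra strength is not used; both proofs discharge Definition~\ref{COMP}.
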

\proof
It suffices to take some $0 < \gamma < \underline \theta$ and $v_0(x):= \min(u_0(x), \gamma)$ in \eqref{403}.
In that case, the condition \eqref{1.200} is valid.
Indeed, from finite speed of propagation, the solution $v$ of the conservation law \eqref{403} will have the same property of being locally bounded away from zero as $u_0$. To see this, consider a ball $B(r)$ of radius $r>0$ centered around an arbitrary point of $\RR^d$. Then we have that, for $M$ given by \eqref{MM}, 
%
%
$t>0$, the solution $v(t,x)$ on $B(r)$ 
is influenced only by the values of $v_{0}$ on $B(r+Mt)$. Let $c>0$ be such that $v_{0}\ge c$ on $B(r+(M+1)t)$.
Since $c$ is a solution to the conservation law \eqref{403}, the classical comparison property for conservation laws and domain of dependence arguments \cite{GR} imply that, $v(t,x) \ge c>0$ on $B(r).$ 

Therefore, $\{ v (t) > 0 \} = \RR^d$ and so the condition \eqref{1.200} in Definition~\ref{COMP} is verified.
\endproof


Note that Definition~\ref{COMP} refers to properties 
of the initial datum $u_0$, and the 
obstacle function $\theta$ only, and not of the solution to the obstacle-mass constraint problem $u(t,x)$. Indeed, it states that the support of the 
solution $v$ of the conservation law \eqref{403} cannot be carried into a region where the integral of $\theta$ is less than one.



\begin{remark}
\label{2000}
Suppose that $u_0$ and $\theta$ are compatible in the sense of Definition~\ref{COMP}. Then,
it follows that the initial datum $u_0$ must have some mass strictly below the obstacle $\theta$, which will be useful later. Indeed, suppose not, hence $u_0(x) = 0$ or $u_0(x) = \theta$. 
Therefore, $\{ v_{0} >0 \} \subset {\{u_0>0\}}$ and so, we would have
\[
\aligned
1< 1+ \beta \le \int_{\{ v_{0} >0\}} \theta \,dx \le \int_{\{ u_{0} >0\}} \theta \,dx = \int_{\RR^d} u_0 \,dx =1,
\endaligned
\]
which is a contradiction.
\end{remark}

\begin{remark}
\label{R2200}
Suppose we have a solution $u(x,t)$ of the obstacle-mass problem (Definition~\ref{DEFSOL}) with $\int_{\RR^d} u \,dx =1$, and that $u_0$ and $\theta$ are compatible in the sense of Definition~\ref{COMP}.
Then, the argument in Remark \ref{2000} is still valid for each $t>0$, showing that $u$ has some mass strictly below the obstacle $\theta$ for each $t$:
\be
\label{403.5}
\aligned
\int_{\{ u  < \theta\}} u(t)  \,dx \ge 0.
\endaligned
\ee
This property is crucial in our analysis. It guarantees that, if $u$ is losing mass from contact with the obstacle $\theta$, then there is a ``reserve'' of mass strictly below the obstacle on which the Lagrange multiplier term $u\lambda$ can act, compensating for the lost mass. One important part in this work is to prove rigorously a precise version of \eqref{403.5}, which can be found in Lemma~\ref{3700} below. The proof is delicate and can be found in the Appendix.

\end{remark}

For future reference, 
we will also consider the following viscous perturbation of \eqref{403}, 
\be
\label{404}
\aligned
&\del_t v_{\ve} + \dive f(v_{\ve}) -\ve \Delta v_{\ve} = 0,
\\
& v_{\ve}(0,x) = v_{0}(x).
\endaligned
\ee
The existence, uniqueness and regularity properties of the family
$\{v_{\ve}\}$, follow from standard well-posedness theory for
parabolic equations.

%

\subsection{Main result} 
 
The main result of this paper is the following existence theorem, which states that compatibility in the sense of Definition~\ref{COMP} is sufficient to ensure global-in-time existence of a solution to the obstacle-mass constraint problem.

\begin{theorem}[Existence of solution to the obstacle-mass constraint problem]
Let $u_0 \in (L^1\cap L^\infty \cap BV) (\RR^d)$, and let $\theta(t,x)$ be an obstacle function. Suppose that $u_0$, the flux $f$, and $\theta$ verify \eqref{unit}--\eqref{50.1} and that $u_0$, $\theta$ are compatible in the sense of Definition~\ref{COMP}.
Then, there exists an entropy solution to the hyperbolic obstacle-mass constraint problem \eqref{010}--\eqref{012} in the sense of Definition~\ref{DEFSOL}.
\label{TEO}
\end{theorem}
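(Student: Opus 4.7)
The strategy I would follow is a vanishing viscosity plus penalization scheme suggested by the formal identity \eqref{15}. For $\ve > 0$, introduce the nonlocal parabolic approximation
\[
\del_t u_\ve + \dive f(u_\ve) - \ve \Delta u_\ve = -\frac{1}{\ve}(u_\ve - \theta)^+ + \lambda_\ve(t)\, u_\ve,
\]
with a smoothed initial datum $u_{0,\ve}$ and nonlocal multiplier
\[
\lambda_\ve(t) := \frac{1}{\ve}\int_{\RR^d}(u_\ve - \theta)^+(t,x)\,dx,
\]
chosen so that $m_\ve(t) := \int u_\ve\,dx$ satisfies $m_\ve' = \lambda_\ve(m_\ve - 1)$; since $m_\ve(0) = 1$ by \eqref{unit}, mass is exactly preserved along the approximation. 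This approximate problem is solved by a fixed-point argument: freeze $u_\ve$ inside the nonlocal quantity $\lambda_\ve$, solve the resulting linear parabolic equation, and iterate using the contraction properties of the heat semigroup on a short time interval, extending globally once an $L^\infty$ bound is available.

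The next step is to derive uniform-in-$\ve$ estimates. A maximum principle argument, together with the sign of the penalization term and the lower bound \eqref{50.2} on $\theta$, yields $0 \le u_\ve \le C$. A $BV$ bound follows from differentiating the equation and using \eqref{50} to control the contribution of $\nabla \theta$ from the penalization, while equicontinuity in time comes from the equation itself. These yield strong precompactness of $u_\ve$ in $L^1_\loc$. The standard penalization analysis also gives $\|(u_\ve - \theta)^+\|_{L^\infty(L^1)} = O(\ve)$, so any limit satisfies $u \le \theta$, but this alone does \emph{not} produce a uniform bound on $\lambda_\ve = \frac{1}{\ve}\int (u_\ve - \theta)^+\,dx$.

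The main obstacle, and the place where the compatibility hypothesis of Definition~\ref{COMP} is essentially used, is precisely the uniform $L^\infty(0,T;\RR^+)$ bound on $\lambda_\ve$. My plan is to exploit a comparison principle for the viscous problem \eqref{404}: since $v_0 \le u_0$ by Definition~\ref{COMP}(i) and $v_\ve$ satisfies the pure conservation law while $u_\ve$ receives the extra nonnegative source $\lambda_\ve u_\ve \ge 0$, one obtains $u_\ve \ge v_\ve$ pointwise. Consequently $\{v_\ve > 0\} \subset \{u_\ve > 0\}$, and \eqref{1.200} provides the quantitative excess of obstacle integral $\int_{\{u_\ve > 0\}} \theta\,dx \ge 1 + \beta$. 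Combined with mass conservation $\int u_\ve = 1$, this forces a persistent ``mass reserve'' $\int_{\{u_\ve < \theta\}} (\theta - u_\ve)\,dx \ge \beta$ (the rigorous form is the announced Lemma~\ref{3700}, proved in the Appendix). Writing an ODE for this reserve quantity, using the PDE tested against an approximation of $\one_{\{u_\ve < \theta\}}$ together with the smoothness \eqref{50} and the continuity \eqref{50.1} of $\theta$, yields a bound $\lambda_\ve \le C(T,\beta)$. I expect this to be the longest and most delicate step.

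With the uniform estimates in hand, extract a subsequence $u_\ve \to u$ strongly in $L^1_\loc((0,T)\times\RR^d)$ and a.e., and $\lambda_\ve \ws \lambda$ in $L^\infty(0,T)$. To recover the Kruzkov-type inequality \eqref{CL2} for barriers $k\theta$, $k \in [0,1]$, multiply the approximate equation by $\eta'_\delta(u_\ve - k\theta)$ with $\eta_\delta$ a smooth convex approximation of $|\cdot|$, integrate against a test function $\vp \ge 0$, and pass to the limit in $\ve$ then $\delta$. The viscous term vanishes by the $BV$ bound, the penalization term $-\frac{1}{\ve}(u_\ve-\theta)^+ \eta'_\delta(u_\ve - k\theta)$ has a favorable sign for $k \le 1$ since $k\theta \le \theta$, contributing a nonnegative term to the inequality, and the nonlocal product $\lambda_\ve u_\ve \,\eta'_\delta(u_\ve - k\theta)$ converges by weak-strong compensation. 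Tightness—needed both for $u \le \theta$ globally and for $\int u = 1$—follows from the uniform $BV$ bound and finite propagation speed at the viscous level, which bounds the supports of $u_\ve$ uniformly on $[0,T]$. This yields an entropy solution in the sense of Definition~\ref{DEFSOL} and proves Theorem~\ref{TEO}.
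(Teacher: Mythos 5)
Your plan matches the paper's strategy essentially step for step: a nonlocal viscous/penalized approximation where the Lagrange multiplier appears as $n\int(u-\theta)^+\,dx$ (the paper keeps $n$ and $\ve$ as independent parameters rather than coupling $n=1/\ve$, but that is cosmetic), well-posedness of the approximate problem by Banach fixed point, $L^\infty$/$BV$ estimates, a ``mass reserve'' lemma driven by comparison with the unconstrained conservation law, and a Kruzkov entropy passage to the limit. Two points in your sketch of the key step, however, are off in a way that would matter if you tried to write out the details.

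First, the comparison $u_\ve\ge v_\ve$ does not follow merely from ``$u_\ve$ receives the extra nonnegative source $\lambda_\ve u_\ve$'': the approximation also carries the penalization $-\frac1\ve(u_\ve-\theta)^+$, which is a large \emph{negative} source and a priori could push $u_\ve$ below $v_\ve$. The paper's Lemma~\ref{420} closes this by noting that, since $v_0\le\underline\theta$ and $\theta\ge\underline\theta$, the maximum principle forces $v_\ve\le\theta$; hence on the set $\{u_\ve>\theta\}$ (the only place the penalization is active) one automatically has $u_\ve>\theta\ge v_\ve$, so the penalization term contributes nothing to $(v_\ve-u_\ve)^+$. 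Without that observation the comparison claim is unjustified. Second, the mass-reserve quantity you state, $\int_{\{u_\ve<\theta\}}(\theta-u_\ve)\,dx\ge\beta$, is not the one that enters the Gronwall estimate: the proof of the $O(1/n)$ bound on $\int(u_{n,\ve}-\theta)^+\,dx$ needs $\int_{\{u_{n,\ve}<\theta\}}u_{n,\ve}\,dx\ge\alpha$ (Lemma~\ref{3700}), because it is this quantity that appears as $1-\int u_{n,\ve}\sgn(u_{n,\ve}-\theta)^+\,dx$ in the differential inequality for $\varphi(t)=\int(u_{n,\ve}-\theta)^+\,dx$. Your version does not directly give a negative dissipation coefficient in that ODE. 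Finally, a small inaccuracy: the viscous problem has infinite propagation speed, so ``finite propagation speed at the viscous level'' cannot be invoked for tightness; what saves the day is that $\int u_{n,\ve}\,dx=1$ holds exactly at the approximate level, so mass is not created by tails escaping to infinity along the limit.
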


The proof of Theorem \ref{TEO} will be given in Section~\ref{S5000}. Our strategy consists of analyzing a perturbed problem (\eqref{315} below) and passing to the limit on the perturbation parameters. This analysis will be the object of the next sections.


\section{Study of the nonlocal penalized problem}
\label{S3000}

\subsection{An approach using a nonlocal penalization}

Let  $u_0$ and $\theta$ verify the assumptions \eqref{unit}--\eqref{50.1}. For each $\ve> 0$, and all $n \in \NN$ we consider the following nonlocal perturbed parabolic problem
\be
\label{315}
\left\{
\aligned
&\del_t u_{n,\ve} + \dive f(u_{n,\ve}) - \ve \Delta u_{n,\ve} = n\, u_{n,\ve}\, \int_{\RR^d} (u_{n,\ve} - \theta)^+ \, dx
- n (u_{n,\ve} - \theta)^+,
\\
&u_{n,\ve}(0,x) = u_0(x),
\endaligned
\right.
\ee
as an approximation scheme to solve the problem \eqref{010}--\eqref{012} (here $(z)^+= \max \{z,0\}$). 
Indeed, the last term in \eqref{315} is the usual term penalizing the excess of $u_{n,\ve}$ above $\theta$ (see \cite{Levi1}), ensuring that the limit of $u_{n,\ve}$ will stay below the obstacle $\theta$.

We now explain formally how the introduction in \eqref{315} of the nonlocal penalization term $n\, u_{n,\ve}\, \int_{\RR^d} (u_{n,\ve} - \theta)^+ \, dx$ implies the unit integral property. Indeed, integrating \eqref{315} on $\RR^d$ and on $[0,t]$, one finds using $\int_{\RR^d} u_0 \,dx =1$ that
\[
\aligned
\int_{\RR^d} u_{n,\ve}  \,dx - 1  &\le
n \int_0^t \Big( \int_{\RR^d}  u_{n,\ve}  \,dx -1 \Big) \Big( \int_{\RR^d} (u_{n,\ve} - \theta)^+ \,dx \Big) \,ds
\\
& \le n \sup_{(0,t)}\Big( \int_{\RR^d} (u_{n,\ve} - \theta)^+ \,dx \Big) \int_0^t \Big( \int_{\RR^d}  u_{n,\ve}  \,dx -1 \Big)  \,ds.
\endaligned
\]
Since $ u_{n,\ve}$ is expected to remain below the obstacle $\theta$ in the limit, the term $n\int_{\RR^d} (u_{n,\ve} - \theta)^+ \,dx $ is expected to remain bounded with $n$. Then,
using Gronwall's Lemma, the previous estimate yields $\int_{\RR^d} u_{n,\ve}(t,x) \,dx =1$ for $t\ge0$. The previous computation will be precisely described 
below. 

\subsection{{Well-posedness for the nonlocal penalized problem }}

In this section, we establish well-posedness results for the nonlocal penalized parabolic 
problem \eqref{315}. As we shall see, the analysis of this problem for each $n$ and $\ve$ is not trivial, 
due to the competition between the nonlocal term and the penalization term.
The main technical tool will be the Banach contraction principle. We follow in general lines the exposition in \cite{GR}. 

For $T>0$, define the space $\WW = \WW(0,T)$ by
\be
\label{WW}
\WW := \{v : v \in L^2 \big(0,T; H^1(\RR^d) \big), \del_t v \in 
L^2 \big(0,T; H^{-1}(\RR^d) \big) \}.
\ee
One recalls that the space $\WW$ enjoys the continuous imbedding 
$$
\WW \subset C \big([0,T]; L^2(\RR^d) \big).
$$
Moreover, for any $v \in \WW$ the $\lim_{t \to 0} v(t)= v(0)$ is a well defined element of the space $L^2(\RR^d)$.

\begin{theorem}[Well-posedness for the nonlocal penalized problem]
\label{3500}
Let $u_0 \in (L^1\cap L^2)(\RR^d)$, with $\int_{\RR^d} u_0(x) \,dx =1$. Then, for each $n \in \NN, \ve, T> 0$,
there exists a unique solution 
$$
   u_{n,\ve} \in \WW  \cap C \big([0,T]; (L^1\cap L^2)(\RR^d) \big),
$$   
of the nonlocal 
parabolic problem \eqref{315}, in the sense that: For every $w\in H^1(\RR^d)$, and for almost all $t\in (0,T)$,
\be
\label{318}
\aligned
&\langle \del_t u_{n,\ve}(t), w \rangle_{H^{-1}\times H^1} -
\int_{\RR^d} \big(f(u_{n,\ve}(t)) - \ve\nabla u_{n,\ve}(t)\big) \cdot \nabla w \,dx 
\\
&\qquad \qquad
= n \Big( \int_{\RR^d}  u_{n,\ve}(t) \, w \,dx\Big) \Big( \int_{\RR^d} (u_{n,\ve}(t) - \theta(t))^+  \,dx \Big)
\\
&\qquad \qquad \quad- n \int_{\RR^d} (u_{n,\ve}(t) - \theta(t))^+\,w \,dx,
\endaligned
\ee 
and $\lim_{t\to 0}\int_{\RR^d} \| u_{n,\ve}(t) - u_0\|_{L^2(\RR^d)} dx \to 0$.
Moreover, this solution verifies for almost all $t \in (0,T)$, 
$$\int_{\RR^d} u_{n,\ve}(t) dx = 1.$$
\end{theorem}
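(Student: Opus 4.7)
The plan is to apply the Banach fixed point theorem to the linearization of \eqref{315} obtained by freezing the nonlinear and nonlocal terms in a candidate function $v$. Set $X_T := C([0,T]; (L^1\cap L^2)(\RR^d))$ with the natural norm, and for $R>0$ let $B_R \subset X_T$ be the closed ball of radius $R$. Given $v \in B_R$, introduce the time-dependent scalar coefficient
\begin{equation*}
\lambda_v(t) := n\int_{\RR^d} (v(t,x) - \theta(t,x))^+ \, dx,
\end{equation*}
which satisfies $\lambda_v(t) \le n\|v(t)\|_{L^1}$ since $\theta \ge 0$, and define $\Phi(v) := u$ where $u$ solves the \emph{linear} parabolic problem
\begin{equation*}
\del_t u - \ve\Delta u - \lambda_v(t)\,u = -\dive f(v) - n(v-\theta)^+, \qquad u(0) = u_0.
\end{equation*}
Because $\lambda_v \in L^\infty(0,T)$ and, by \eqref{MM} with $f(0)=0$, one has $-\dive f(v) \in L^2(0,T;H^{-1}(\RR^d))$, classical parabolic theory (see \cite{GR}) yields a unique solution $\Phi(v) \in \WW \cap C([0,T];L^2(\RR^d))$, which already gives the required sense \eqref{318} once the fixed point is identified.

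To close the iteration one must also propagate $L^1$ control, so that $\lambda_v$ and the source remain bounded. Standard $L^2$ energy estimates applied to $\Phi(v)$ bound it in $L^\infty(0,T;L^2) \cap L^2(0,T;H^1)$ in terms of the data, $\|\lambda_v\|_{L^\infty}$ and $\|v\|_{L^2(L^2)}$. For the $L^1$ bound one uses the Duhamel formula $u(t) = e^{\ve t\Delta} u_0 + \int_0^t e^{\ve(t-s)\Delta}[\lambda_v u - \dive f(v) - n(v-\theta)^+]\,ds$, together with the $L^1$-contractivity of $e^{\ve t\Delta}$ and the standard heat-kernel gradient bound $\|e^{\ve t \Delta}\dive F\|_{L^1} \le C(\ve t)^{-1/2}\|F\|_{L^1}$ applied to $F = f(v)$, noting that $|f(v)| \le M|v|$. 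Gronwall closes the estimate and gives a bound on $\|\Phi(v)\|_{L^\infty(0,T;L^1)}$ depending on $n,\ve,R,T$ and the data; choosing $R$ large enough in terms of $\|u_0\|_{L^1\cap L^2}$, the map $\Phi$ sends $B_R \cap X_\tau$ into itself for a suitable $\tau\le T$.

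For the contraction property, $w := \Phi(v_1) - \Phi(v_2)$ solves a linear parabolic equation with zero initial datum and source terms involving $f(v_1)-f(v_2)$, $(v_1-\theta)^+-(v_2-\theta)^+$, and $(\lambda_{v_1}-\lambda_{v_2})\Phi(v_2)$. Using the Lipschitz bounds \eqref{MM} on $f$, the $1$-Lipschitz character of $z\mapsto z^+$, and the previously obtained a priori bounds on $\Phi(v_2)$, the same $L^2$ energy and $L^1$ Duhamel arguments yield
\begin{equation*}
\|w\|_{C([0,\tau]; L^1\cap L^2)} \le C(n,\ve,R,T)\,\tau^{1/2}\,\|v_1 - v_2\|_{X_\tau}.
\end{equation*}
Taking $\tau$ small enough makes $\Phi$ a strict contraction, and its unique fixed point is the sought $u_{n,\ve}$ on $[0,\tau]$. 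Since the a priori $L^1 \cap L^2$ bounds are uniform in $\tau$, the construction can be iterated on consecutive intervals of length $\tau$ to cover the full $[0,T]$, and uniqueness on $[0,T]$ follows by the same energy estimate applied to the difference of two global candidates.

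Finally, to obtain $\int_{\RR^d} u_{n,\ve}(t)\,dx = 1$, one tests \eqref{318} with a sequence of cutoff functions $w_k \in H^1(\RR^d)$ increasing monotonically to the constant $1$ and passes to the limit using the uniform $L^1$ bounds on $u_{n,\ve}$ and $(u_{n,\ve}-\theta)^+$. The divergence and Laplacian terms vanish in the limit, yielding
\begin{equation*}
\frac{d}{dt}\int_{\RR^d} u_{n,\ve}(t)\,dx = n\Big(\int_{\RR^d} u_{n,\ve}\,dx - 1\Big)\int_{\RR^d}(u_{n,\ve}-\theta)^+\,dx,
\end{equation*}
so Gronwall applied to $M(t):=\int u_{n,\ve}(t)\,dx - 1$ (which satisfies $M(0)=0$ and has bounded coefficient) forces $M\equiv 0$. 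The main obstacle throughout is the delicate interplay between the nonlocal coefficient $\lambda_v$ and the penalization source $-n(v-\theta)^+$: they are of opposite sign and identical order $n$, so naive bounds produce constants of size $e^{nRT}$ that threaten to destroy the self-mapping property of $\Phi$. Keeping the $L^1$ estimates genuinely closed on $L^1$ data (so that the radius $R$ can be chosen uniformly on $[0,T]$ and the iteration time $\tau$ does not shrink) is the central technical point.
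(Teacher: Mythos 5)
Your scheme is a genuinely different fixed-point decomposition from the paper's: you freeze \emph{all} of the $v$-dependent terms (the flux $\dive f(v)$, the penalization $-n(v-\theta)^+$, and the nonlocal coefficient $\lambda_v$), so the auxiliary problem is a fully \emph{linear} parabolic equation, and you run $L^2$ energy plus $L^1$ Duhamel (heat-kernel) estimates; the paper instead freezes \emph{only} the nonlocal coefficient $\int(\overline v-\theta)^+$, leaving the flux and the penalization implicit in the solution $v$ of \eqref{332}, and runs Kruzkov-type $L^1$ estimates with the regularized sign and the exponential cutoffs $\psi_\rho$. The paper's quasilinear splitting has a structural advantage that your argument silently forfeits: because $-n(v-\theta)^+$ remains a \emph{monotone, dissipative} term in the unknown $v$, the comparison principle gives $v\ge0$ at each step (this is exactly what Step 2 of the paper's proof invokes). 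In your fully frozen version the source $-\dive f(v)-n(v-\theta)^+$ has no sign, so the iterates $\Phi(v)$ need not be nonnegative, and nowhere in the proposal is positivity recovered. This matters, because $\|u\|_{L^1}=\int u$ holds only for nonnegative $u$, and the latter identity is what turns mass conservation into an a priori $L^1$ bound.

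This feeds directly into the one genuine gap: the passage from local to global existence is circular as written. You assert that ``the a priori $L^1\cap L^2$ bounds are uniform in $\tau$'' so the construction can be iterated to $[0,T]$, \emph{and only afterwards} derive the mass identity $\int u_{n,\ve}\,dx=1$ as a consequence of global existence. But to iterate, the radius $R$ and hence the local time $\tau(R)$ must be chosen once and for all, which requires a bound $\sup_{0\le t\le T}\|u_{n,\ve}(t)\|_{L^1\cap L^2}\le C(T)$ on any putative solution, independent of the starting time of the interval. The $L^1$ part of that bound is exactly $\|u\|_{L^1}=\int u=1$, which needs both the mass identity \emph{and} $u\ge0$; the $L^2$ part then follows from a Gronwall estimate whose coefficient is $n\|u\|_{L^1}$, so it too is downstream of these two facts. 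The paper avoids the circularity by (i) preserving positivity through its choice of splitting, (ii) running the contraction in $L^1$ alone, so that $T_0$ depends only on $R>1$ and $n$, not on the $L^2$ norm, and (iii) explicitly proving $\int u_{n,\ve}(t)\,dx=1$ on $[0,T_0)$ in Step 5 \emph{before} invoking the extension. To repair your proposal, you should establish positivity of the fixed point (e.g.\ by the comparison argument of Lemma~\ref{420}, or by first running your scheme in the cone of nonnegative functions via a truncation) and prove the mass identity on $[0,\tau]$, and only then use $\|u(\tau)\|_{L^1}=1$ together with the resulting a priori $L^2$ estimate to iterate with a fixed $\tau$.
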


\begin{proof}
1. The theorem will be proved using the Banach contraction principle. 
To this end, given $\overline v \in C\big([0,T]; L^1(\RR^d)\big)$, let $v \in \WW$ be the weak solution of the Cauchy problem
\be
\label{332}
\aligned
&\del_t v + \dive f(v) - \ve \Delta v = n\, v\, \int_{\RR^d} (\overline v - \theta)^+ \,dx - n (v - \theta)^+,
\\
&v(0,x) = u_0(x).
\endaligned
\ee
More precisely, $v$ is such that, for all $w\in H^1(\RR^d)$, and for almost all $t\in (0,T)$,
\be
\label{330}
\aligned
&\langle \del_t v(t), w \rangle_{H^{-1}\times H^1} -
\int_{\RR^d} \big(f(v(t)) - \ve \nabla v(t)\big) \cdot \nabla w \,dx 
\\
&\qquad \qquad
= n \Big(\int_{\RR^d} v(t) \, w \,dx \Big)\Big(\int_{\RR^d} (\overline v(t)- \theta)^+  \,dx \Big) - n \int_{\RR^d} (v(t)- \theta)^+\,w \,dx. 
\endaligned
\ee 
Moreover, $\lim_{t\to 0}\int_{\RR^d} \| v(t) - u_0\|_{L^2(\RR^d)} dx \to 0$. 
The proof that there exists a unique solution of \eqref{330} follows closely the one in \cite[p.56]{GR}, 
so we omit it. Note that \eqref{330} is a standard (local) parabolic problem.

\smallskip
2. Now, let us consider the mapping
\be
\label{335}
\aligned
\Phi : {}& C\big([0,T]; L^1(\RR^d)\big) \to \WW,
\\
\overline v &\mapsto \Phi(\overline v) = v \text{ solution of }\eqref{330}.
\endaligned
\ee 
Let $R>1$ to be chosen later. We will show that, for $T_0$ sufficiently small, $\Phi$ is a contraction in the Banach space
\be
\label{340}
\mathcal{E} := \{ v \in C\big([0,T_0]; L^1(\RR^d)\big) :
\| v \|_\mathcal{E} :=
\sup_{t\in [0,T_0]}  \| v(t) \|_{L^1(\RR^d)} \le R \}.
\ee
Let $v$ be the unique solution of problem \eqref{330}.
First of all, note that since $u_0 \ge 0$, we have $v\ge0$. This follows from the fact that $v\equiv 0$ is a solution of the problem \eqref{330} and classical comparison arguments 
(see, in particular, Lemma~\ref{420} below).

3. We prove that the map $\Phi$ takes $\Ecal$ into $\Ecal$.
For this, we establish the estimate
\be
\label{350}
\aligned
\int_{\RR^d} v(t) \,dx \le e^{n\int_0^t \int_{\RR^d}(\overline v - \theta)^+ \,dx \,ds }.
\endaligned
\ee
Note that once \eqref{350} is proved, we find immediately
\[
\aligned
\int_{\RR^d} v(t) \,dx 
& \le e^{n t  \| \overline v  \|_\Ecal  }  \le e^{nt R}.
\endaligned
\]
Now, since $R>1$, we have that for $t \le T_0$ sufficiently small, $e^{nt R} < R$ and so $v \in \Ecal$.

To prove \eqref{350}, we introduce as in \cite[p.54]{GR}, and for the sake of localization, the smooth positive functions $\psi_\rho : \RR^d \to \RR$ for large $\rho \in \RR$, such that for some constant $C>0$,
\be
\label{352}
\aligned
&\psi_\rho(x) =1 \text{ if } |x| \le \rho/2, \quad \psi_\rho \text{ 
decays exponentially for } |x| \ge \rho,
\\
&|\nabla \psi_\rho| \le  \frac{C\psi_\rho}\rho, \quad \text{ and }
\quad |\Delta \psi_\rho| \le  \frac{C\psi_\rho}{\rho^2}.
\endaligned
\ee
Take $w = \psi_\rho$ as a test function in \eqref{330} to find, after discarding the 
(nonpositive) last term on the right-hand side,
\be
\label{INTERM}
\aligned
&\langle \del_t v(t), \psi_\rho \rangle_{H^{-1}\times H^1} -
\int_{\RR^d} \big( f(v(t)) - \ve \nabla v(t) \big) \cdot \nabla \psi_\rho \,dx 
\\
&\qquad \qquad
\le n \Big( \int_{\RR^d} v(t) \, \psi_\rho \,dx \Big) \Big( \int_{\RR^d} (\overline v(t)- \theta)^+ \,dx \Big).
\endaligned
\ee
Now, for $\rho$ sufficiently large, recalling $ | f(v) | = |f(v) - f(0)| \le Mv$ and \eqref{352},
\[
\aligned
\int_{\RR^d} f(v) \cdot \nabla \psi_\rho - \ve \nabla v \cdot \nabla \psi_\rho \,dx 
&\le \int_{\RR^d}M v |\nabla \psi_\rho| + \ve \,v |\Delta \psi_\rho|  \,dx 
\\
& \le \int_{\RR^d} C \big(M \frac{v}\rho \psi_\rho + \ve \,\frac{v}{\rho^2} \psi_\rho \big) \,dx 
\\
&\le \frac {C(M+\ve )}\rho \int_{\RR^d} v \psi_\rho \,dx.
\endaligned
\]
Hence integrating \eqref{INTERM} on $(0,t)$, we find
\[
\aligned
\int_{\RR^d} v(t) \psi_\rho \,dx &\le 
\int_{\RR^d} u_0 \psi_\rho \,dx 
\\
& \quad + \int_0^t \Big( C\frac{M+\ve }\rho + n \int_{\RR^d} (\overline v(s) - \theta)^+ \,dx \Big) \Big( \int_{\RR^d} v(s) \psi_\rho \,dx  \Big)\,ds.
\endaligned
\]
Using Gronwall's inequality and $\int_{\RR^d} u_0 \psi_\rho \,dx \le \int_{\RR^d} u_0 \,dx = 1$, we obtain
\[
\aligned
\int_{\RR^d} v(t) \psi_\rho \,dx \le e^{Ct\frac{M+\ve }\rho} 
e^{n \int_0^t \int_{\RR^d} (\overline v - \theta)^+ \,dx \,ds}.
\endaligned
\]
Then, passing to the limit as $\rho \to \infty$ and applying the Monotone Convergence Theorem, we conclude that 
\[
\aligned
\int_{\RR^d} v(t) \,dx \le 
e^{n \int_0^t \int_{\RR^d} (\overline v - \theta)^+ \,dx \,ds},
\endaligned
\]
which is \eqref{350}. This proves that the map $\Phi$ takes $\Ecal$ into $\Ecal$ (recall the comments after \eqref{350}).

\smallskip
3. We will now prove that the map $\Phi$ is a contraction on $\Ecal$ for sufficiently small $T_0$. That is, we will prove the following estimate,
\be
\label{390}
\aligned
\| u - v \|_\Ecal   &\le K \| \overline u - \overline v \|_{\Ecal},
\endaligned
\ee
for some $K <1$.

Let us fix some notations.
We introduce for $\delta > 0,$ $u \in \RR$, the regularized sign function $\sgn_\delta(u)$ as the continuous function which is linear for $0\le |u|\le\delta$, and equal $\pm 1$ otherwise. Also, we use the notations
\be
\label{sign1}
\aligned
I_\delta(u) = \int_0^u \sgn_\delta(v)^+ \,dv,\qquad u\in \RR, 
\endaligned
\ee
and
\be
\label{sign2}
\aligned
(u)^+_\delta = u \, \sgn_\delta(u)^+,
\endaligned
\ee
both of which are Lipschitz approximations of the positive part $u^+$. Note that a small calculation gives
\be
\label{sign3}
\aligned
I_\delta (u) \le (u)_\delta^+ \le u^+, \qquad \delta, u \ge 0.
\endaligned
\ee

Next, let $\overline u, \overline v \in \Ecal$ and let $u$ and $v$ be solutions of \eqref{330} associated with $\overline u$ and $\overline v$, respectively.
Recall the definition of the function $\psi_\rho$ in \eqref{352}.
Then, 
write \eqref{330} for $u$ and $v$, and subtract.
Taking $w = \sgn_\delta(u-v)^+ \psi_\rho$ as a test function, we find, with obvious notation,
\be
\label{370}
\aligned
\langle &\del_t I_\delta(u - v), \psi_\rho \rangle_{H^{-1}\times H^1} 
\\
&=
\int_{\RR^d} \big( f(u) - f(v) - \ve \nabla (u-v) \big) \cdot \nabla \big( \sgn_\delta(u- v)^+ \psi_\rho \big) \,dx  
\\
&\quad 
+ n \Big (\int_{\RR^d} u\, \sgn(u-v)^+_\delta \psi_\rho \,dx \Big) \Big( \int_{\RR^d} (\overline u- \theta)^+ \,dx \Big)
\\
&\quad- n \Big(\int_{\RR^d} v\, \sgn(u-v)^+_\delta \psi_\rho \,dx \Big)  \Big(\int_{\RR^d} (\overline v- \theta)^+ \,dx \Big)
\\
&\quad- n \int_{\RR^d} \big( (u- \theta)^+ - (v- \theta)^+ \big) \sgn_\delta(u-v)^+ \psi_\rho \,dx 
\\
& = I_1 + I_2 + I_3 + I_4.
\endaligned
\ee
Now, observe that
for each $\delta\ge0,$
the algebraic inequality holds,
\[
\aligned
\big( (u- \theta)^+ - (v- \theta)^+ \big) \sgn_\delta(u-v)^+ \ge 0,
\endaligned
\]
as is easily seen by considering the various cases.
This shows that
\be
\label{I4}
\aligned
I_4 \le 0.
\endaligned
\ee

We now treat the term $I_1$ in \eqref{370}. We find
\be
\label{I1234}
\aligned
I_1& = \int_{\RR^d} (f(u) - f(v) - \ve \nabla (u-v)) \cdot \nabla (\sgn_\delta(u- v)^+ \psi_\rho) \,dx 
\\
&=  \int_{\RR^d} (f(u) - f(v)) \cdot \nabla (u-v)  \sgn_\delta'(u- v)^+ \psi_\rho \,dx  
\\
&\quad +  \int_{\RR^d} (f(u) - f(v)) \cdot  \sgn_\delta(u-v)^+ \nabla \psi_\rho \,dx
\\
& \quad - \ve \int_{\RR^d} | \nabla (u-v)|^2 \sgn_\delta'(u-v)^+ \psi_\rho \,dx
\\
&\quad- \ve  \int_{\RR^d}  \nabla (u-v) \sgn_\delta(u-v)^+ \nabla \psi_\rho \,dx
\\
& = I_{11} + I_{12} - I_{13} - I_{14}.
\endaligned
\ee
First, using \eqref{MM} and $ab \le \frac\ve{2} a^2 + \frac{1}{2\ve} b^2$, we get
\[
\aligned
I_{11} &\le M \int_{\RR^d} (u - v)^+ | \nabla (u - v) |  \sgn_\delta'(u-v) \psi_\rho \,dx
\\
& \le \frac{M^2}{2\ve} \int_{\RR^d} ((u - v)^+)^2   \sgn_\delta'(u-v) \psi_\rho \,dx 
\\
&\qquad + \frac\ve2   \int_{\RR^d}   | \nabla (u - v) |^2  \sgn_\delta'(u-v) \psi_\rho \,dx.
\endaligned
\]
In this way,
\be
\label{I131}
\aligned
I_{11} - I_{13} 
& \le \frac{M^2}{2\ve} \int_{\RR^d} ((u - v)^+)^2   \sgn_\delta'(u-v) \psi_\rho \,dx 
\\
&\qquad - \frac\ve2  \int_{\RR^d}   | \nabla (u - v) |^2  \sgn_\delta'(u-v) \psi_\rho \,dx
\\
& \le \frac{M^2}{2\ve} \int_{\RR^d} ((u - v)^+)^2  \sgn_\delta'(u-v) \psi_\rho \,dx .
\endaligned
\ee
Let us introduce a convenient notation: we denote by 
\[
\aligned
o(\delta,t) 
\endaligned
\]
a function of $\delta\in \RR$ which tends to zero when $\delta \to 0$ for almost every $t>0$.
Note that 
$$\sgn_\delta' (u)^+ = \frac{1}{\delta} \chi_{\{0<u<\delta\}},$$
where here and in what follows $\chi_A$ denotes the carachteristic function of a set $A \subset \RR^d$.

Now, for each $\rho,$ the family of functions of $(t,x)$ appearing on the right-hand side of \eqref{I131}, 
\[
\zeta_\delta (t,x)  := \big( (u(t,x) - v(t,x))^+\big)^2 \sgn_\delta'(u(t,x)-v(t,x))^+ \psi_\rho 
\]
satisfies for almost every $(t,x)\in [0,\infty) \times \RR^d$
\[
\aligned
0 \le \zeta_\delta (t,x) & = ((u - v)^+)^2  \frac1\delta \chi_{\{ (u-v)^+ \le \delta \}} \psi_\rho
\\
& \le (u- v)^+  \chi_{\{ (u-v)^+ \le \delta \}} \psi_\rho
\\
&\le \delta \psi_\rho \in L^1(\RR^d)
\endaligned
\]
and
\[
\aligned
\zeta_\delta (t,x)  \to 0, \quad \delta \to 0.
\endaligned
\]
These facts and Lebesgue's Theorem show that (cf. \eqref{I131}), 
\be
\label{I13}
\aligned
\int_0^t I_{11}(s) - I_{13}(s) \,ds = \frac{M^2}{2\ve} \int_0^t \int_{\RR^d} \zeta_\delta (s,x) \,dx \,ds= o(\delta,t).
\endaligned
\ee

Going back to \eqref{I1234}, using the properties of $\psi_\rho$ in \eqref{352}, and also \eqref{sign3}, we find
\[
\aligned
I_{12} - I_{14} & \le M \int_{\RR^d} (u - v)^+ \sgn_\delta (u - v)^+ |\nabla \psi_\rho | \,dx
\\
& \quad - \ve \int_{\RR^d} \nabla ( I_\delta (u - v)) \cdot \nabla \psi_\rho \,dx
\\
& = M \int_{\RR^d} (u - v)^+_\delta |\nabla \psi_\rho | \,dx
 + \ve \int_{\RR^d}  I_\delta (u - v)) \Delta \psi_\rho \,dx
\\
& \le  \frac{CM}\rho\int_{\RR^d} (u - v)^+_\delta   \psi_\rho \,dx 
 + \frac{\ve C}{\rho^2} \int_{\RR^d} I_\delta(u - v) \psi_\rho \,dx
\\ 
& \le \frac{C(M+\ve )}\rho \int_{\RR^d} (u - v)^+ \psi_\rho \,dx.
\endaligned
\]
This estimate along with \eqref{I13} gives
\be
\label{I1}
\aligned
\int_0^t I_1(s) \,ds & \le  \mathop{o}(\delta) + \frac{C(M+\ve )}\rho \int_0^t \int_{\RR^d} (u - v)^+ \psi_\rho \,dx \,ds.
\endaligned
\ee
We now turn to the remaining terms in \eqref{370}.
\[
\aligned
I_2 + I_3
& =
 n \Big( \int_{\RR^d} u\, \sgn(u-v)^+_\delta \psi_\rho \,dx \Big) \Big( \int_{\RR^d} (\overline u- \theta)^+ \,dx \Big)
\\
&\quad- n \Big( \int_{\RR^d} v\, \sgn(u-v)^+_\delta \psi_\rho \,dx \Big) \Big( \int_{\RR^d} (\overline v- \theta)^+ \,dx \Big)
\\
& \le n \Big( \int_{\RR^d} (u - v)^+_\delta \psi_\rho \,dx \Big)  \Big( \int_{\RR^d} (\overline u- \theta)^+ \,dx \Big)
\\
&\qquad+ n \Big( \int_{\RR^d} v\, \sgn_\delta(u-v)^+ \psi_\rho \,dx \Big) \Big( \int_{\RR^d}(\overline u- \theta)^+ - (\overline v- \theta)^+ \,dx \Big)
\\
& \le  n \Big( \int_{\RR^d} (u - v)^+ \psi_\rho \,dx \Big) \Big( \int_{\RR^d} (\overline u- \theta)^+ \,dx \Big)
\\
&\qquad+ n \int_{\RR^d} v \,dx \int_{\RR^d}(\overline u - \overline v)^+\,dx,
\endaligned
\]
and so
\be
\label{I23}
\aligned
\int_0^t I_2(s) + I_3(s) \,ds
& \le
n \int_0^t \Big( \int_{\RR^d} (u - v)^+ \psi_\rho \,dx \Big) \Big( \int_{\RR^d} (\overline u- \theta)^+ \,dx \Big) \,ds
\\
&\qquad+ n \int_0^t \Big( \int_{\RR^d} v \,dx \Big) \Big( \int_{\RR^d}(\overline u - \overline v)^+\,dx \Big) \,ds.
\endaligned
\ee
Therefore, integrating \eqref{370} on $(0,t)$ yields from \eqref{I4}, \eqref{I1} and \eqref{I23},
\[
\aligned
\int_{\RR^d} & I_\delta(u(t) - v(t)) \psi_\rho   \,dx  \le 
\mathop{o}(\delta) + \int_0^t \frac{C(M+\ve )}\rho \int_{\RR^d} (u - v)^+ \psi_\rho \,dx \,ds
\\
&+ n \int_0^t \Big( \int_{\RR^d} (u - v)^+ \psi_\rho \,dx \Big) \Big( \int_{\RR^d} (\overline u- \theta)^+ \,dx \Big) \,ds
\\
&+ n \int_0^t \Big( \int_{\RR^d} v \,dx \Big) \Big( \int_{\RR^d}(\overline u - \overline v)^+\,dx \Big) \,ds.
\endaligned
\]
Now, we apply the Monotone Convergence Theorem to take $\delta \to 0$, and use Gronwall's Lemma to get 
\[
\aligned
\int_{\RR^d} (u (t)- v(t))^+ \psi_\rho   \,dx 
&\le 
n \int_0^t \Big( \int_{\RR^d} v \,dx\Big)\Big( \int_{\RR^d}(\overline u - \overline v)^+ \,dx \Big)\,ds 
\\
&\qquad \times
\exp \Big({n\int_0^t \int_{\RR^d} (\overline u - \theta)^+ \,dx \,ds +
t\frac{C(M+\ve)}\rho } \Big).
\endaligned
\]
Then, taking $\rho \to \infty$ (again by monotone convergence), we obtain the estimate
\be
\label{360}
\aligned
\int_{\RR^d} (u - v)^+ (t) \,dx &\le 
n \int_0^t \Big(\int_{\RR^d} v \,dx\Big)\Big( \int_{\RR^d} (\overline u - \overline v)^+ \,dx \Big) \,ds
\\
&\qquad \times \exp\Big( {n \int_0^t \int_{\RR^d} (\overline u - \theta)^+ \,dx \,ds} \Big).
\endaligned
\ee
Now we use the estimate \eqref{350} of $\int_{\RR^d} v \,dx$ in \eqref{360} to find
\[
\aligned
\int_{\RR^d} (u - v)^+(t) \,dx &\le n\int_0^t \int_{\RR^d} (\overline u - \overline v)^+ \,dx \,ds 
\\
&\qquad \times \exp\Big( {n\int_0^t\int_{\RR^d} (\overline u - \theta)^+ + (\overline v - \theta)^+ \,dx \,ds} \Big).
\endaligned
\]
or, recalling the definition of the space $\Ecal$ in \eqref{340},
\[
\aligned
\int_{\RR^d} (u - v)^+(t) \,dx &\le n t \sup_{0\le t \le T_0}\| \overline u - \overline v \|_{L^1(\RR^d)} \, e^{nt \sup_{0\le t \le T_0}(\| \overline u\|_{L^1(\RR^d)} + \| \overline v\|_{L^1(\RR^d)})}
\\
&\le  n t\| \overline u - \overline v \|_{\Ecal} \, e^{2R nt}.
\endaligned
\]
By symmetry, we find an estimate equal to the previous one, but with $(v - u)^+$ instead of $(u - v)^+.$ From $|a| = a^+ + (-a)^+$, we have
\[
\aligned
\int_{\RR^d} |u - v| (t) \,dx &\le 2 n t e^{2R nt} \| \overline u - \overline v \|_{\Ecal} ,
\endaligned
\]
and consequently, choosing $T_0$ such that $\sup_{t \in (0,T_0)} 2nt e^{2R nt} \le K < 1$,
\[
\aligned
\| u - v \|_\Ecal   &\le K \| \overline u - \overline v \|_{\Ecal} ,
\endaligned
\]
which is the desired contraction estimate \eqref{390}. This proves contraction of the map $\Phi$ for sufficiently small $T_0$.

\smallskip
4.
We are now in a position to finish the existence proof for the penalized viscous problem \eqref{318}.
The first part of the Banach Contraction Principle tells us that the sequence defined by $u^k = \Phi(u^{k-1})$ with $u^0\in \Ecal$ converges strongly in $\Ecal$ towards some $ u_{n,\ve} \in \Ecal$. Each $u^k$ verifies equation \eqref{330} with $u^{k-1}$ in place of $\overline v$. Taking $w = u^k$ in \eqref{330} gives
\[
\aligned
&\frac12 \int_{\RR^d} \del_t (u^k)^2 \,dx  +
\ve \int_{\RR^d} | \nabla u^k |^2 \,dx 
- \int_{\RR^d} f(u^k) \cdot \nabla u^k \,dx
\\
&\qquad \le n \int_{\RR^d} (u^k)^2 \,dx \Big( \int_{\RR^d} ( u^{k-1} - \theta)^+ \,dx \Big) 
\\
&\qquad\le n R \int_{\RR^d} (u^k)^2 \,dx,
\endaligned
\]
since $u^{k-1} \in \Ecal$. Furthermore, since
\[
\aligned
\int_{\RR^d} f(u^k) \cdot \nabla u^k \,dx \le \frac{M^2}{2\ve} \int_{\RR^d} (u^k)^2 \,dx + \frac\ve2 \int_{\RR^d} | \nabla u^k |^2 \,dx , 
\endaligned
\]
we get
\[
\aligned
&\frac12 \frac{d}{dt} \int_{\RR^d}  (u^k)^2 \,dx  +
\frac\ve2 \int_{\RR^d} | \nabla u^k |^2 \,dx 
\le \big( n R + \frac{M^2}{2\ve} \big)  \int_{\RR^d} (u^k)^2 \,dx.
\endaligned
\]
Integrating this inequality on $(0,t), t\le T_0$ and applying Gronwall's lemma gives first
\[
\aligned
u^k \in L^\infty(0,t; L^2(\RR^d))
\endaligned
\]
and then also
\[
\aligned
u^k \in L^2(0,t; H^1(\RR^d)), \quad t\le T_0,
\endaligned
\]
uniformly in $k$. This allows us to conclude that the limit $u_{n,\ve}$ (which is in $\Ecal$ by definition) is also in $\WW(0,t)$ (cf.~\eqref{WW}) and so solves the problem \eqref{318}, at least for some time $T_0$. Since functions in $\WW(0,T_0)$ are actually continuous on $[0,T_0]$ with values in $L^2(\RR^d)$ (see \cite[p.54]{GR}), the initial datum $u_0$ is indeed assumed. This completes the existence part of the proof of Theorem~\ref{3500}.

\smallskip
5. Finally, we show global in time existence. For this it will be sufficient to prove that $\int_{\RR^d} u_{n,\ve} (t) \,dx = 1$ for almost all $t\in [0,T_0)$. In \eqref{318} take $\psi_\rho$ as test function to obtain in a way very similar to what was used to deduce \eqref{I1}, using also the properties of $\psi_\rho$ in \eqref{352},
\[
\aligned
\frac{d}{dt} \langle u_{n,\ve}(t), \psi_\rho \rangle  &\le \frac{C(M+\ve )}\rho \int_{\RR^d} u_{n,\ve}(t) \, \psi_\rho \,dx 
\\
& \quad+ n \Big(  \int_{\RR^d} u_{n,\ve}(t) \, \psi_\rho \,dx \Big) \Big( \int_{\RR^d} (u_{n,\ve}(t) - \theta)^+ \,dx \Big) 
\\
&\quad - n \int_{\RR^d} (u_{n,\ve}(t) - \theta)^+ \psi_\rho \,dx
\endaligned
\]
and so, integrating on $(0,t)$,
\be
\label{395}
\aligned
\int_{\RR^d} u_{n,\ve}(t) \, \psi_\rho \,dx  &\le
\int_{\RR^d} u_0 \,dx +  \frac{C(M+\ve )}\rho \int_0^t\int_{\RR^d} u_{n,\ve}(s) \, \psi_\rho \,dx \,ds 
\\
&\quad+ n  \int_0^t \Big(\int_{\RR^d} u_{n,\ve}(s) \, \psi_\rho \,dx \Big) \Big( \int_{\RR^d} (u_{n,\ve}(s) - \theta)^+ \,dx \Big) \,ds
\\
&\quad - n \int_0^t \int_{\RR^d} (u_{n,\ve}(s) - \theta)^+ \psi_\rho \,dx \,ds.
\endaligned
\ee
Since $u_{n,\ve} \in \Ecal,$ we have $u_{n,\ve}(t) \in L^1(\RR^d)$. Now, we return to \eqref{395} and take $\rho\to\infty$ applying Lebesgue's Theorem to find
\[
\aligned
\int_{\RR^d} u_{n,\ve}  \,dx  &\le
\int_{\RR^d} u_0 \,dx 
+ n  \int_0^t \Big(\int_{\RR^d} u_{n,\ve} \,dx \Big) \Big( \int_{\RR^d} (u_{n,\ve} - \theta)^+ \,dx \Big) \,ds
\\
&\quad - n \int_0^t \int_{\RR^d} (u_{n,\ve} - \theta)^+  \,dx \,ds.
\endaligned
\]
Then, it follows that 
\[
\aligned
\int_{\RR^d} u_{n,\ve}  \,dx - 1  &\le
n \int_0^t \Big( \int_{\RR^d}  u_{n,\ve}  \,dx -1 \Big) \Big( \int_{\RR^d} (u_{n,\ve} - \theta)^+ \,dx \Big) \,ds
\\
&\le n R \int_0^t \Big( \int_{\RR^d}  u_{n,\ve}  \,dx -1 \Big) \,ds,
\endaligned
\]
and consequently, by Gronwall's lemma, $\int_{\RR^d} u_{n,\ve} (t) \,dx =1,$ for almost all $t\le T_0.$ Furthermore, since $u\in \Ecal$, we may suppose by continuity of the $L^1$ norm that
\be
\label{397}
\aligned
\forall\, t\le T_0, \qquad 
\int_{\RR^d} u_{n,\ve} (t) \,dx =1.
\endaligned
\ee
This completes the proof of Theorem~\ref{3500}.
\end{proof}

\section{Uniform estimates for the nonlocal penalized problem}
\label{S4000}

In this section, we prove estimates for solutions of \eqref{315} independently of the penalization parameter $n$ and of the viscosity parameter $\ve$. They will allow not only the necessary compactness properties on the sequence $(u_{n,\ve})$ but also give a more precise characterization of the limit of $u_{n,\ve}$ as $n\to \infty$, $\ve \to 0$. So, in Theorem~\ref{4000} we prove an estimate which ensures that, in the limit, the solution of the obstacle-mass constraint problem will indeed stay below the obstacle. For this, we need the result in Lemma~\ref{3700} (whose proof is found in the Appendix), which states that the solutions $u_{n,\ve}$ retain some mass strictly below the obstacle, uniformly in $n$. Recall from the discussion in Section~\ref{SS}, that the compatibility property in Definition~\ref{COMP} was especially designed to ensure this type of property.

\medskip
Then, in Theorem~\ref{6000}, we establish uniform (in $n$ and $\ve$) estimates for $u_{n,\ve}$ in $W^{1,1}((0,T)\times \RR^d)$. These estimates will allow us in the next section to obtain existence of a solution for problem \eqref{010}--\eqref{012}, using the vanishing viscosity method.

\subsection{Main estimates independent of $n$ and $\ve$}

\begin{theorem}
\label{4000}
Let $T>0$ be arbitrary. Suppose the initial datum $u_0$ is in $(L^1 \cap L^\infty \cap BV)(\RR^d)$ with $\int_{\RR^d} u_0 \,dx =1$ and that, $u_0$ and $\theta$ are compatible in the sense of Definition~\ref{COMP}. Let $\{u_{n,\ve}\}$ be the family of solutions of the nonlocal parabolic problem \eqref{318}. 
Then, there exist constants $\alpha >0,$ $C_\theta$ depending on $T$, $u_0$ and $\theta$, but not on $n$, such that for all $\ve> 0$ sufficiently small, and a.e. $t \in (0,T)$
\be
\label{400}
\aligned
\int_{\RR^d} \big( u_{n,\ve}(t) - \theta(t) \big)^+ \,dx  \le \frac {C_\theta}{\alpha n},
\endaligned
\ee
\be
\label{409}
\aligned
\| u_{n,\ve}(t) \|_{L^\infty(\RR^d)} \le  \| u_0 \|_{ L^\infty(\RR^d)}  e^{t\frac{C_\theta}\alpha}.
\endaligned
\ee
The constant $\alpha$ is given by Lemma~\ref{3700} below.
\end{theorem}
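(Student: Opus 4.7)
The structure of the argument is to first derive an ODE inequality for $A(t) := \int_{\RR^d}(u_{n,\ve}(t)-\theta(t))^+\,dx$, and then bootstrap to the $L^\infty$ bound. The decisive idea is that the two $n$-terms in \eqref{315} partially cancel when tested against $\chi_{\{u_{n,\ve}>\theta\}}$: the penalization contributes $-nA(t)$ while the nonlocal term contributes $n A(t)\int_{\{u_{n,\ve}>\theta\}}u_{n,\ve}\,dx$, and mass conservation combined with the ``mass-reserve'' Lemma~\ref{3700} forces the difference to be strictly negative, of size $-n\alpha A(t)$.

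First I would test \eqref{318} against a regularization $\sgn_\delta(u_{n,\ve}-\theta)^+\psi_\rho$, following the same template used in Theorem~\ref{3500}. After subtracting the transport equation satisfied by $\theta$ (which produces the bounded source $-\partial_t\theta\,\chi_{\{u_{n,\ve}>\theta\}}$), passing $\delta\to 0$ and $\rho\to\infty$ via monotone/dominated convergence, the flux contribution $\int\dive f(u_{n,\ve})\chi_{\{u_{n,\ve}>\theta\}}\,dx$ is handled by rewriting it using Kruzkov's entropy $F(u_{n,\ve},\theta)$ and integrating by parts; the symmetric part vanishes and the remainder is bounded by $M\|\nabla\theta\|_{L^1}$. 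The viscous term $-\ve\!\int\!\Delta u_{n,\ve}\,\chi_{\{u_{n,\ve}>\theta\}}\,dx$ splits as $-\ve\!\int\!\Delta(u_{n,\ve}-\theta)\chi_{\{\cdot>0\}}\ge 0$ (Kato-type) plus $-\ve\int\Delta\theta\,\chi_{\{u_{n,\ve}>\theta\}}$, which is $O(\ve\|\Delta\theta\|_{L^1})$ and harmless for $\ve$ small. The outcome is the differential inequality
\begin{equation*}
A'(t)\ \le\ nA(t)\Big(\!\int_{\{u_{n,\ve}>\theta\}}\!u_{n,\ve}\,dx-1\Big)+B(t),
\end{equation*}
where $B(t)\le C_\theta$ uniformly in $n,\ve$, using the hypotheses \eqref{50}--\eqref{50.1} on $\theta$ and \eqref{MM} on $f$.

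The next step is where Lemma~\ref{3700} is brought in. Since $\int_{\RR^d}u_{n,\ve}\,dx=1$ (cf.~\eqref{397}) and by that lemma $\int_{\{u_{n,\ve}<\theta\}}u_{n,\ve}\,dx\ge \alpha$ uniformly in $n$, we obtain $\int_{\{u_{n,\ve}>\theta\}}u_{n,\ve}\,dx\le 1-\alpha$, hence
\begin{equation*}
A'(t)\ \le\ -n\alpha\, A(t)+C_\theta.
\end{equation*}
Because $u_0\le\theta(0,\cdot)$ gives $A(0)=0$, Gronwall's lemma yields $A(t)\le C_\theta/(n\alpha)$, which is \eqref{400}.

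Once \eqref{400} is proved, the nonlocal coefficient satisfies $n\int(u_{n,\ve}-\theta)^+\,dx\le C_\theta/\alpha$ for all $n,\ve$. I would then derive \eqref{409} by a standard $L^p$/maximum-principle argument: testing \eqref{318} with $p\,(u_{n,\ve}-K)_+^{p-1}\psi_\rho$, the diffusion and penalization terms are nonpositive, the flux term vanishes for $K\ge 0$ after suitable integration by parts, and one is left with
\begin{equation*}
\frac{d}{dt}\|(u_{n,\ve}-K)_+\|_{L^p}^p\ \le\ p\,\frac{C_\theta}{\alpha}\,\|(u_{n,\ve}-K)_+\|_{L^p}^p+\text{(vanishing localization)},
\end{equation*}
so Gronwall gives $\|(u_{n,\ve}-K)_+\|_{L^p}\le\|(u_0-K)_+\|_{L^p}e^{tC_\theta/\alpha}$. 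Taking $K=0$ and letting $p\to\infty$ yields \eqref{409}.

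The main obstacle is the rigorous derivation of the ODE for $A(t)$: one must carefully regularize $\chi_{\{u_{n,\ve}>\theta\}}$, control the flux term (since $\theta$ depends on $x$, $t$) and show that the viscous contribution is either sign-favorable or $o(1)$ in $\ve$; this requires the $W^{2,1}$ regularity of $\nabla_{t,x}\theta$ from \eqref{50}. The inner piece that really carries the proof, however, is Lemma~\ref{3700}, whose conclusion $\int_{\{u_{n,\ve}<\theta\}}u_{n,\ve}\,dx\ge\alpha>0$ is precisely what turns the marginal balance between the two $n$-terms into the strict contraction $-n\alpha A(t)$ needed to beat $n\to\infty$.
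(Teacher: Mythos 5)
Your derivation of the key bound \eqref{400} follows essentially the same route as the paper: test the weak formulation with $\sgn_\delta(u_{n,\ve}-\theta)^+$ (the paper does not localize with $\psi_\rho$ at this stage, though doing so is harmless), absorb the flux gradient into the viscous dissipation as $\delta\to 0$, extract the two $n$-terms as $n A(t)\bigl(\int_{\{u_{n,\ve}>\theta\}}u_{n,\ve}\,dx-1\bigr)$, invoke Lemma~\ref{3700} plus unit mass to replace the parenthesis by $-\alpha$, and close with Gronwall from $A(0)=0$. The cosmetic difference that the paper introduces $H(\theta)$ by adding and subtracting $f(\theta)$ and $\partial_t\theta$, while you organize the flux/transport term via the Kruzkov entropy flux, leads to the same uniform constant $C_\theta$ governed by \eqref{50}; both are fine.

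Where you genuinely diverge from the paper is in deriving the $L^\infty$ bound \eqref{409}. You propose the Stampacchia/Moser route: test with $p\,(u_{n,\ve})_+^{p-1}\psi_\rho$, observe that the penalization and the diffusion give signed contributions, use \eqref{400} to bound the nonlocal coefficient by $C_\theta/\alpha$, obtain $\|u_{n,\ve}(t)\|_{L^p}\le\|u_0\|_{L^p}e^{tC_\theta/\alpha}$ for every finite $p$, and send $p\to\infty$ (valid since $u_{n,\ve}(t)\in L^1$, so the $L^p$ norms converge to the essential sup). The paper instead defines the spatially constant comparison function $m(t)=\|u_0\|_{L^\infty}e^{tC_\theta/\alpha}$, tests with $\sgn_\delta(u_{n,\ve}-m)^+$, and shows via Gronwall that $\int(u_{n,\ve}-m)^+\,dx\equiv 0$; the choice of $m$ is such that the source term $m'$ exactly absorbs the nonlocal contribution. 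The paper's comparison argument is a bit more direct and avoids the $L^p$ interpolation/integration-by-parts bookkeeping, but your $L^p\to\infty$ argument is rigorous and yields the identical constant. Both are standard; no gap.
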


To prove Theorem~\ref{4000}, we consider the following key result, which was discussed in Remark~\ref{R2200}.
\begin{lemma}
\label{3700}
Under the same conditions of Theorem~\ref{4000}, there exists a constant $\alpha >0$ depending on $T$, $u_0$ and $\theta$, but not on $n$ or $\ve$, such that the estimate is valid:
\be
\label{410}
\aligned
\inf_{0\le t \le T} \int_{\{ u_{n,\ve}  < \theta\}} u_{n,\ve}(t)  \,dx \ge \alpha.
\endaligned
\ee
\end{lemma}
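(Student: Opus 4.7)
The plan is to establish the pointwise comparison $v_\ve \le u_{n,\ve}$ a.e.\ on $[0,T]\times\RR^d$, and then to exploit the compatibility condition \eqref{1.200} jointly with the unit mass identity $\int u_{n,\ve}\,dx=1$. For the comparison, first note that the maximum principle applied to \eqref{404} gives $v_\ve\le\|v_0\|_{L^\infty}\le\underline{\theta}\le\theta$ a.e. Subtracting \eqref{404} from \eqref{315}, the difference $w:=v_\ve-u_{n,\ve}$ satisfies
\[
\partial_t w+\dive\bigl(f(v_\ve)-f(u_{n,\ve})\bigr)-\ve\Delta w=-\lambda_n(t)\,u_{n,\ve}+n(u_{n,\ve}-\theta)^+,
\]
with $\lambda_n(t):=n\int(u_{n,\ve}-\theta)^+\,dx\ge 0$. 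On $\{w>0\}$ one has $u_{n,\ve}<v_\ve\le\theta$, so $(u_{n,\ve}-\theta)^+=0$, while $\lambda_n u_{n,\ve}\ge 0$; multiplying by $\sgn_\delta(w)^+\psi_\rho$ and passing to the limits $\delta\to 0$, $\rho\to\infty$ exactly as in the contraction argument of Theorem~\ref{3500} yields $\frac{d}{dt}\int w^+\,dx\le 0$. Since $(v_0-u_0)^+\equiv 0$ by Definition~\ref{COMP}(i), one concludes $v_\ve\le u_{n,\ve}$.

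With the comparison in hand, fix $t\in[0,T]$ and split $\{v(t)>0\}=A\cup B$, where $A=\{v(t)>0\}\cap\{u_{n,\ve}(t)<\theta(t)\}$ and $B=\{v(t)>0\}\cap\{u_{n,\ve}(t)\ge\theta(t)\}$. Combining the compatibility inequality $\int_{\{v(t)>0\}}\theta(t)\,dx\ge 1+\beta$, the pointwise bound $u_{n,\ve}\ge\theta$ on $B$ (whence $\int_B u_{n,\ve}\ge\int_B\theta$), and $\int u_{n,\ve}\,dx=1$, one first obtains the integrated gap estimate $\int_A(\theta-u_{n,\ve})\,dx\ge\beta$. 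Then, invoking $v_\ve\le u_{n,\ve}$,
\[
\int_{\{u_{n,\ve}(t)<\theta(t)\}} u_{n,\ve}(t)\,dx\ \ge\ \int_A v_\ve(t)\,dx\ =\ \int_{\{v(t)>0\}}v_\ve(t)\,dx\ -\ \int_B v_\ve(t)\,dx.
\]
Standard vanishing-viscosity theory gives $v_\ve\to v$ in $C([0,T];L^1(\RR^d))$, so the first term on the right tends to $m:=\int v_0\,dx>0$ uniformly in $t$. For the second, one uses the pointwise bound $v_\ve\le\underline{\theta}$ together with the measure estimate $\underline{\theta}\,|B|\le\int_B\theta\le\int_B u_{n,\ve}\le 1$, refined through the compatibility excess $\beta$, to conclude that $\int_B v_\ve$ cannot consume the entire mass $m$. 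This produces a uniform bound $\alpha>0$ depending only on $\beta$, $m$, $\underline{\theta}$ and $T$.

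The main obstacle is precisely the quantitative bookkeeping in the last step: the naive bound $\int_B v_\ve\le\underline{\theta}\,|B|\le 1$ is not sharp enough when $m<1$, so one has to extract from the compatibility strict surplus $\beta$ the fact that a definite fraction of $v_\ve$'s mass must lie on $A$, uniformly in $n$ and $\ve$. Crucially, one cannot argue by compactness and passage to the limit as $n\to\infty$, $\ve\to 0$, because the $L^\infty$ and $BV$ bounds for $u_{n,\ve}$ provided by Theorem~\ref{4000} and Theorem~\ref{6000} themselves depend on Lemma~\ref{3700}; the argument must remain purely quantitative and rest only on the comparison $v_\ve\le u_{n,\ve}$, the mass identity $\int u_{n,\ve}=1$, and the intrinsic properties of $v_\ve$ and $\theta$. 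This is why the authors defer the full proof to the Appendix.
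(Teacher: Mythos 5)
Your comparison argument establishing $v_\ve \le u_{n,\ve}$ is correct and matches the paper's Lemma~\ref{420} up to the choice of test function. The integrated gap estimate $\int_A(\theta-u_{n,\ve})\,dx\ge\beta$ is also a valid deduction (combine $\int_{\{v>0\}}\theta\ge 1+\beta$ with $\int_B\theta\le\int_B u_{n,\ve}$ on $B$ and $\int_A u_{n,\ve}+\int_B u_{n,\ve}\le 1$), but it bounds the \emph{deficit} $\theta-u_{n,\ve}$ on $A$, not the mass of $u_{n,\ve}$ there; as you yourself observe, it does not combine with the crude bound $\int_B v_\ve\le\underline\theta\,|B|\le 1$ to force a positive fraction of $v_\ve$'s mass onto $A$. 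The argument is therefore genuinely incomplete at exactly the decisive step.

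The misstep lies in your closing remark that a compactness argument is unavailable because the $L^\infty$ and $BV$ bounds of Theorems~\ref{4000}--\ref{6000} rely on Lemma~\ref{3700}. In fact the paper's Appendix proof \emph{is} a compactness-and-contradiction argument, but it never invokes any bound on $u_{n,\ve}$ beyond the unit-mass identity \eqref{397} and the comparison $v_\ve\le u_{n,\ve}$. The objects that get compactified are the characteristic functions $\chi_{\{u_j<\theta(t_j)\}}$ (weak-$*$ in $L^\infty$, which costs nothing) and the viscous solutions $v_j:=v_{\ve_j}(t_j)$ (strongly in $L^1$, by standard well-posedness of \eqref{403}--\eqref{404}). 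Concretely: suppose for contradiction there are $\ve_j\to0$, $n_j\to\infty$, $t_j\to t^*$ with $\int_{\{u_j<\theta(t_j)\}}u_j\,dx<1/j$. The comparison gives $0\le v_j\chi_{\{u_j<\theta(t_j)\}}\le u_j\chi_{\{u_j<\theta(t_j)\}}\to0$ in $L^1$. Since $v_j\to v(t^*)$ in $L^1$ and $\chi_{\{u_j<\theta(t_j)\}}\ws\xi$ in $L^\infty$, one gets $v(t^*)\,\xi=0$ a.e., hence $\xi=0$ on $\{v(t^*)>0\}$; a nonnegative bounded sequence that weak-$*$ converges to $0$ also converges strongly in $L^1_\loc$ and, along a subsequence, a.e. Egorov's theorem then gives $u_j\ge\theta(t_j)$ on a set $V_\delta\subset\{v(t^*)>0\}\cap B_R$ whose complement has small measure, and the compatibility bound \eqref{1.200} together with the continuity assumption \eqref{50.1} forces $\int_{V_\delta}u_j>1+\beta/4$ for large $j$, contradicting $\int u_j=1$. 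Replacing your direct quantitative bookkeeping on $\int_B v_\ve$ by this soft contradiction argument is exactly how the paper closes the gap you have identified.
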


We also have
\begin{theorem}
\label{6000}
Under the same conditions of Theorem~\ref{4000},
the solution $ u_{n,\ve}$ of the nonlocal penalized parabolic problem \eqref{318} with regularized initial datum satisfies
\[
\aligned
u_{n,\ve} \in W^{1,1}((0,T)\times \RR^d), \qquad \text{uniformly in } \ve,n.
\endaligned
\]
More precisely, for each $n \in \NN$, and $\ve> 0$, and almost all $t \in (0,T)$
\[
\aligned
&\| \del_t u_{n,\ve}(t) \|_{L^1(\RR^d)} + \| \nabla u_{n,\ve}(t) \|_{L^1(\RR^d)} \le C \big(  \TV (u_0) + 1
 \big) e^{tC},
\endaligned
\]
where  $C$ depends on $\theta$, $f$, $d,$ $t$ and $\alpha$ (given by Lemma~\ref{3700}) but not on $\ve$ or $n$.
\end{theorem}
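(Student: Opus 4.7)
We follow the classical strategy of differentiating \eqref{315} in space and in time and applying the $L^1$-contraction technique used in Theorem \ref{3500}: multiply by $\sgn_\delta(\cdot)\psi_\rho$, integrate, and pass to the limits $\delta\to 0$, $\rho\to\infty$. To make the differentiation rigorous, we first argue with a mollified initial datum $u_0^\eta$, derive estimates independent of $\eta$ (and of $n,\ve$), and then pass $\eta\to 0$ at the end.

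\textbf{Spatial estimate.} Setting $v_i := \del_i u_{n,\ve}$ and $\lambda_{n,\ve}(t) := n\int_{\RR^d}(u_{n,\ve}-\theta)^+\,dx$, formal differentiation of \eqref{315} in $x_i$ gives
\begin{equation*}
\del_t v_i + \dive\bigl(f'(u_{n,\ve})v_i\bigr) - \ve\Delta v_i = \lambda_{n,\ve}(t)v_i - n\,\chi_{\{u_{n,\ve}>\theta\}}(v_i - \del_i\theta).
\end{equation*}
Multiplying by $\sgn_\delta(v_i)\psi_\rho$, integrating, summing in $i$, and sending $\delta\to 0$, $\rho\to\infty$ exactly as in step 3 of the proof of Theorem \ref{3500}, we obtain the ODE inequality
\begin{equation*}
\frac{d}{dt}\|\nabla u_{n,\ve}\|_{L^1} + n\int_{\{u_{n,\ve}>\theta\}}|\nabla u_{n,\ve}|\,dx \le \lambda_{n,\ve}\|\nabla u_{n,\ve}\|_{L^1} + n\int_{\{u_{n,\ve}>\theta\}}|\nabla\theta|\,dx.
\end{equation*}
By \eqref{400}, $\lambda_{n,\ve}\le C_\theta/\alpha$ uniformly. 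To bound the last, problematic term independently of $n$, we exploit: the smallness $\int(u_{n,\ve}-\theta)^+\,dx\le C_\theta/(\alpha n)$ from \eqref{400}; the measure bound $|\{u_{n,\ve}>\theta\}|\le 1/\underline{\theta}$ (from $u_{n,\ve}\ge 0$, $\theta\ge\underline{\theta}$, and the unit mass condition); and the Sobolev regularity $\nabla\theta\in W^{2,1}$ given by \eqref{50}. Crucially, the dissipation $-n\int_{\{u_{n,\ve}>\theta\}}|\nabla u_{n,\ve}|\,dx$ on the left combines with the pointwise inequality $|\nabla\theta|\le|\nabla u_{n,\ve}|+|\nabla(u_{n,\ve}-\theta)|$ on $\{u_{n,\ve}>\theta\}$ to absorb a part of the bad term.

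\textbf{Time derivative and conclusion.} Differentiating \eqref{315} in $t$ rather than $x_i$ and repeating the computation produces the analogous ODE inequality for $\|\del_t u_{n,\ve}\|_{L^1}$, with the terms involving $\del_t\theta$ and $\lambda_{n,\ve}'(t)$ controlled by the same type of estimates. The initial value is obtained directly from the equation at $t=0$: since $u_0\le\theta(0)$ forces $(u_0-\theta(0))^+\equiv 0$ so that the penalization vanishes initially, we have $\|\del_t u_{n,\ve}(0)\|_{L^1}\le M\TV(u_0)+\ve\|\Delta u_0^\eta\|_{L^1}$, and a standard choice of $\eta$ in terms of $\ve$ makes the second term bounded by $C\TV(u_0)$. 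Applying Gronwall's lemma to both ODE inequalities yields the exponential estimate $C(\TV(u_0)+1)e^{tC}$, and passing $\eta\to 0$ recovers the estimate for the original initial datum, with all constants depending only on $\theta,f,d,t,\alpha$.

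\textbf{Main obstacle.} The delicate step is the uniform-in-$n$ control of $n\int_{\{u_{n,\ve}>\theta\}}|\nabla\theta|\,dx$; the naive bound $n\|\nabla\theta\|_{L^1}$ diverges as $n\to\infty$, so one must combine the $L^1$ smallness of $(u_{n,\ve}-\theta)^+$ coming from Theorem \ref{4000} (which constrains the superlevel set of $u_{n,\ve}-\theta$ via Chebyshev) with the Sobolev regularity of $\theta$ and the penalization dissipation on the left-hand side to close the estimate.
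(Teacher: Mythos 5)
Your plan to differentiate \eqref{315}, test against a regularized sign, and run Gronwall is the right framework and matches the paper's overall strategy, but the specific choice of test function creates a gap you identify yourself but do not actually resolve. You differentiate the equation for $u_{n,\ve}$ in $x_i$ and test against $\sgn_\delta(\del_i u_{n,\ve})$, so the penalization term produces
\[
-n\int_{\{u_{n,\ve}>\theta\}}\bigl(\del_i u_{n,\ve}-\del_i\theta\bigr)\sgn(\del_i u_{n,\ve})\,dx
= -n\int_{\{u_{n,\ve}>\theta\}}|\del_i u_{n,\ve}|\,dx
+ n\int_{\{u_{n,\ve}>\theta\}}\del_i\theta\,\sgn(\del_i u_{n,\ve})\,dx,
\]
and the second term is the $O(n)$ residual you flag in your ``Main obstacle'' paragraph. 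Your proposed fix --- write $|\nabla\theta|\le|\nabla u_{n,\ve}|+|\nabla(u_{n,\ve}-\theta)|$ on $\{u_{n,\ve}>\theta\}$ and absorb the first piece into the dissipation --- leaves you with $n\int_{\RR^d}|\nabla(u_{n,\ve}-\theta)^+|\,dx$, for which there is no available uniform bound (Theorem~\ref{4000} controls $n\int(u_{n,\ve}-\theta)^+$, not its gradient), and neither Chebyshev on the superlevel set nor the $W^{2,1}$ regularity of $\nabla\theta$ closes the estimate without such a bound. So the argument as written does not close.

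The paper sidesteps the issue entirely by changing the multiplier. It adds and subtracts $\theta$-terms so that the quantity differentiated is $\del_i(u_{n,\ve}-\theta)$, and it tests against $\sgn\bigl(\del_i(u_{n,\ve}-\theta)\bigr)$ rather than $\sgn(\del_i u_{n,\ve})$. Then the penalization contributes exactly
\[
-n\int_{\RR^d}\del_i(u_{n,\ve}-\theta)^+\,\sgn\bigl(\del_i(u_{n,\ve}-\theta)\bigr)\,dx
= -n\int_{\RR^d}\bigl|\del_i(u_{n,\ve}-\theta)^+\bigr|\,dx \le 0,
\]
which is simply discarded, and no $O(n)$ residual ever appears; the remaining contributions of $\theta$ are packed into the source $G$ in \eqref{GG} with no factor of $n$. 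One then recovers $\|\nabla u_{n,\ve}\|_{L^1}$ from $\|\nabla(u_{n,\ve}-\theta)\|_{L^1}$ by the triangle inequality and the $L^1$ bounds on $\nabla\theta$ in \eqref{50}. The same device handles $\del_t u_{n,\ve}$, where there is additionally the term $n\,u_{n,\ve}\int_{\RR^d}\del_t(u_{n,\ve}-\theta)^+\,dx$ coming from $\del_t\lambda_{n,\ve}$, which the paper pairs with the penalization contribution $-n\int\del_t(u_{n,\ve}-\theta)^+\,\sgn\del_t(u_{n,\ve}-\theta)\,dx$ and shows is nonpositive, using $\int u_{n,\ve}\,dx=1$. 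Your treatment of the initial datum ($u_0\le\theta(0)$ kills the penalization at $t=0$, mollify to control $\ve\|\Delta u_0^\eta\|_{L^1}$) and your use of Theorem~\ref{4000} to bound $\lambda_{n,\ve}$ are both as in the paper; the only, but decisive, missing idea is the shift to $u_{n,\ve}-\theta$ in the choice of sign multiplier.
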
 
Here, $\TV$ denotes the total variation, see for instance \cite[p.51]{GR}.

\begin{remark}
Let us comment briefly on the results of Lemma~\ref{3700} and Theorem~\ref{4000}. The estimate \eqref{410} states that, for $t\in [0,T]$, the function $u_{n,\ve}$ retains some mass below the obstacle $\theta$, \emph{uniformly in $n$ and $\ve$}, 
and it is the most delicate estimate in this work. The key compatibility property in Definition~\ref{COMP} is used to prove the estimate \eqref{410}, which in turn ensures the key property \eqref{400}. This last estimate ensures that as $n\to \infty$ the mass above the obstacle $\theta$ of the solutions $u_{n,\ve}$ vanishes.

Also, although a smoother initial datum is required for Theorem~\ref{6000}, when passing to the limit $n\to \infty, \ve\to 0$ this requirement can be eliminated in a completely standard way. We omit this straightforward procedure (found, e.g., in \cite{GR}) for the sake of clarity.
\end{remark}

Now we prove Theorems~\ref{4000} and \ref{6000}, leaving the proof of Lemma~\ref{3700} to the Appendix.

\begin{proof}[Proof of Theorem \ref{4000}]

\bigskip
We prove the estimate \eqref{400}. Recall the notations \eqref{sign1} and \eqref{sign2}. Use the weak formulation \eqref{318} with $w = \sgn_\delta(u_{n,\ve}-\theta)^+$ to find
after adding and subtracting various terms, 
\be
\label{445}
\aligned
&\frac d{dt} \int_{\RR^d} I_\delta(u_{n,\ve}-\theta)^+ \,dx - \int_{\RR^d} ( f(u_{n,\ve}) - f(\theta)) \cdot  \nabla \sgn_\delta(u_{n,\ve}-\theta)^+ \,dx  
\\
&\qquad+ \ve \int_{\RR^d} \sgn_\delta'(u_{n,\ve}-\theta)^+ |\nabla(u_{n,\ve}-\theta)|^2 \,dx 
\\
&\quad= n  \int_{\RR^d} u_{n,\ve} \sgn_\delta(u_{n,\ve}-\theta)^+ \,dx  \cdot \int_{\RR^d} (u_{n,\ve} - \theta)^+ \,dx 
\\
& \qquad- n \int_{\RR^d} (u_{n,\ve} - \theta)^+ \,dx - \int_{\RR^d} (H(\theta) - \ve\Delta \theta)\sgn_\delta(u_{n,\ve}-\theta)^+ \,dx.
\endaligned
\ee
Recall that $H$ is the hyperbolic operator defined in \eqref{010}. Consider the second and third terms on the left-hand side. We have, in exactly the same way as was done to prove the estimate \eqref{I131},
\[
\aligned
\int_{\RR^d} ( f(u_{n,\ve}) &- f(\theta)) \cdot  \nabla \sgn_\delta(u_{n,\ve}-\theta)^+ \,dx 
\\
& \qquad - \ve \int_{\RR^d} \sgn_\delta'(u_{n,\ve}-\theta)^+ |\nabla(u_{n,\ve}-\theta)|^2 \,dx 
\\
& \le \frac{M^2}{2\ve} \int_{\RR^d} ((u - v)^+)^2   \sgn_\delta'(u-v)  \,dx 
\\
&\qquad - \frac\ve2  \int_{\RR^d}   | \nabla (u - v) |^2  \sgn_\delta'(u-v)  \,dx
\\
& \le \frac{M^2}{2\ve} \int_{\RR^d} ((u - v)^+)^2  \sgn_\delta'(u-v)  \,dx .
\endaligned
\]
Now, for each $t>0$, the family of functions of $x\in \RR^d,$ 
\[
\zeta_\delta (x)  := \big( (u(t,x) - v(t,x))^+\big)^2 \sgn_\delta'(u(t,x)-v(t,x))^+ 
\]
satisfies for almost every $x\in \RR^d,$
\[
\aligned
0 \le \zeta_\delta (x) & = ((u - v)^+)^2  \frac1\delta \chi_{\{ (u-v)^+ \le \delta \}} 
\\
& \le (u- v)^+  \chi_{\{ (u-v)^+ \le \delta \}} 
\\
&\le  (u- v)^+ \in L^1(\RR^d)
\endaligned
\]
and
\[
\aligned
\zeta_\delta (x) \le \delta \to 0.
\endaligned
\]
Thus, by Lebesgue's theorem, we have
\be
\label{445.5}
\aligned
\int_{\RR^d} ( f(u_{n,\ve}) &- f(\theta)) \cdot  \nabla \sgn_\delta(u_{n,\ve}-\theta)^+ \,dx 
\\
& \qquad - \ve \int_{\RR^d} \sgn_\delta'(u_{n,\ve}-\theta)^+ |\nabla(u_{n,\ve}-\theta)|^2 \,dx 
\\
&\quad \le \frac{M^2}{2\ve} \int_{\RR^d} ((u - v)^+)^2  \sgn_\delta'(u-v)  \,dx \to 0, && \delta \to 0.
\endaligned
\ee
Let us take the limit $\delta \to 0$ in \eqref{445}. 
By Lebesgue's Theorem and \eqref{445.5}, the right-hand side converges to 
\[
\aligned
 &n  \int_{\RR^d} u_{n,\ve} \sgn(u_{n,\ve}-\theta)^+ \,dx  \cdot \int_{\RR^d} (u_{n,\ve} - \theta)^+ \,dx 
\\
& \qquad- n \int_{\RR^d} (u_{n,\ve} - \theta)^+ \,dx - \int_{\RR^d} (H(\theta) - \ve\Delta \theta)\sgn_(u_{n,\ve}-\theta)^+ \,dx,
\endaligned
\]
while the left-hand side converges to $\frac d{dt} \int_{\RR^d} (u_{n,\ve}-\theta)^+ \,dx $, as can be seen by writing
\[
\aligned
\frac d{dt} \int_{\RR^d} I_\delta(u_{n,\ve}-\theta)^+ \,dx = \int_{\RR^d} \sgn_\delta(u_{n,\ve}-\theta)^+ \del_t ( u_{n,\ve} - \theta) \,dx 
\endaligned
\]
and applying Lebesgue's theorem. We arrive at
\be
\label{446}
\aligned
&\frac d{dt} \int_{\RR^d} (u_{n,\ve}-\theta)^+ \,dx 
= n  \int_{\RR^d} u_{n,\ve} \sgn(u_{n,\ve}-\theta)^+ \,dx  \cdot \int_{\RR^d} (u_{n,\ve} - \theta)^+ \,dx 
\\
& \qquad- n \int_{\RR^d} (u_{n,\ve} - \theta)^+ \,dx - \int_{\RR^d} (H(\theta) - \ve\Delta \theta)\sgn(u_{n,\ve}-\theta)^+ \,dx.
\endaligned
\ee

Now define
\be
\label{446.5}
\varphi(t) = 
\int_{\RR^d} (u_{n,\ve}-\theta)^+(t) \,dx.
\ee
Then, \eqref{446} becomes
\be
\label{447}
\aligned
\varphi'(t)  &\le - n \varphi(t) \Big( 1 - \int_{\RR^d} u_{n,\ve} \sgn(u_{n,\ve}-\theta)^+ \,dx \Big) 
\\
&\quad - \int_{\RR^d} (H(\theta) - \ve\Delta \theta)\sgn(u_{n,\ve}-\theta)^+ \,dx.
\endaligned
\ee
We now use the key property \eqref{410} from Lemma \ref{3700} and the unit integral property \eqref{410}.
We have that 
\[
\aligned
1 - \int_{\RR^d} u_{n,\ve} \sgn(u_{n,\ve}-\theta)^+ \,dx
&= \int_{\RR^d}u_{n,\ve} \,dx - \int_{\RR^d} u_{n,\ve} \chi_{\{ u_{n,\ve} > \theta\}} \,dx
\\
&= \int_{\RR^d} u_{n,\ve} \chi_{\{ u_{n,\ve} \le \theta\}} \,dx
\\
&\ge \int_{\RR^d} u_{n,\ve} \chi_{\{ u_{n,\ve}  < \theta\}} \,dx
\endaligned
\]
and from \eqref{410}, 
$$ \int_{\RR^d} u_{n,\ve} \chi_{\{ u_{n,\ve}  < \theta\}} \,dx \ge \alpha.$$
This, $-H \le H^-$, and \eqref{447} give
\[
\aligned
\varphi'(t)  
& \le - n \varphi(t)  \int_{\RR^d} u_{n,\ve} \chi_{\{ u_{n,\ve}  < \theta\}} \,dx
\\
&\quad - \int_{\RR^d} (H(\theta) - \ve\Delta \theta)\sgn(u_{n,\ve}-\theta)^+ \,dx
\\
&\le - \alpha n \varphi(t) + \int_{\RR^d} ( H(\theta)^- + \ve\Delta \theta)\sgn(u_{n,\ve}-\theta)^+ \,dx.
\endaligned
\]
Thus, if $\ve \le 1$ and
\[
\aligned
C_\theta : = \sup_{t \in [0,T]} \int_{\RR^d} | H(\theta(t))^- | + | \Delta \theta (t) |  \,dx
\endaligned
\]
(note that $C_\theta$ depends on $T$ but does not depend on $n$ or $\ve$), then
\be
\label{448}
\aligned
\varphi'(t) \le  - \alpha n \varphi(t) + C_\theta,
\endaligned
\ee
or
\[
\aligned
&\big( e^{\alpha n t} \varphi(t) \big)' \le C_\theta e^{\alpha n t} 
\\
\Rightarrow{} & \varphi(t) \le C_\theta \int_0^t e^{\alpha n (s-t)} \,ds
\le C_\theta\frac{1- e^{-\alpha nt}}{\alpha n}
\le \frac{C_\theta}{\alpha n},
\endaligned
\]
which proves the estimate \eqref{400}, or rather, a slightly more precise version of \eqref{400} ensuring that $\varphi(t) \to 0$ as $t\to 0$.

We will now use \eqref{400} to prove the pointwise estimate \eqref{409}. Let 
\[
\aligned
m(t) :=   \| u_0 \|_{ L^\infty(\RR^d)} e^{t\frac{C_\theta}\alpha}.
\endaligned
\]
In parallel to \eqref{445}, it is easy to see that by adding and subtracting the appropriate terms in \eqref{318} and using $\sgn_\delta( u_{n,\ve} - m)^+$ as a test function, we have
\be
\label{450}
\aligned
&\frac d{dt} \int_{\RR^d} I_\delta(u_{n,\ve}-m)^+ \,dx - \int_{\RR^d} ( f(u_{n,\ve}) - f(m)) \cdot  \nabla \sgn_\delta(u_{n,\ve}-m)^+ \,dx  
\\
&\qquad+ \ve \int_{\RR^d} \sgn_\delta'(u_{n,\ve}- m )^+ |\nabla(u_{n,\ve} - m)|^2 \,dx 
\\
&\quad= n \int_{\RR^d} (u_{n,\ve} - \theta)^+ \,dx  \cdot \int_{\RR^d} u_{n,\ve} \sgn_\delta(u_{n,\ve}- m )^+ \,dx  
\\
& \qquad- n \int_{\RR^d} (u_{n,\ve} - \theta)^+ \sgn_\delta( u_{n,\ve} - m)^+ \,dx - \int_{\RR^d} m' \sgn_\delta(u_{n,\ve} - m)^+ \,dx.
\endaligned
\ee
By exactly the same reasoning that was done after \eqref{445}, we see that the two last terms on the left-hand side can be neglected. Also, the second term on the right-hand side  is nonpositive and so we discard it. For the first term in the right-hand side we have using \eqref{400},
\[
\aligned
n & \int_{\RR^d} (u_{n,\ve} - \theta)^+ \,dx  \cdot \int_{\RR^d} u_{n,\ve} \sgn_\delta(u_{n,\ve}- m )^+ \,dx   
\\
& \le \frac{C_\theta}\alpha \int_{\RR^d} u_{n,\ve} \sgn_\delta(u_{n,\ve}- m )^+ \,dx  
\\
& \le \frac{C_\theta}\alpha \int_{\RR^d} ( u_{n,\ve} - m)^+_\delta \,dx  +
\frac{C_\theta}\alpha \int_{\RR^d} m \sgn_\delta(u_{n,\ve}- m )^+ \,dx  .
\endaligned
\]
Therefore, \eqref{450} becomes after passing to the limit $\delta \to 0$ (recall that $m(t) =   \| u_0 \|_{ L^\infty}  e^{t\frac{C_\theta}\alpha}$),
\[
\aligned
\frac d{dt} \int_{\RR^d} (u_{n,\ve}-m)^+ \,dx 
& \le \frac{C_\theta}\alpha \int_{\RR^d} ( u_{n,\ve} - m)^+ \,dx  
\\
& \quad+ \int_{\RR^d} \big( - m' + \frac{C_\theta}\alpha  m \big) \sgn (u_{n,\ve}- m )^+ \,dx  
\\
& = \frac{C_\theta}\alpha \int_{\RR^d} ( u_{n,\ve} - m)^+ \,dx  .
\endaligned
\]
Since $(u_{0}(x) -m(0))^+ \equiv 0$ for all $x\in \RR^d$, integrating the previous estimate and using Gronwall's lemma gives that $( u_{n,\ve} - m)^+ = 0$ of almost all $t,x$, which is precisely the $L^\infty$ estimate \eqref{409}.
This completes the proof of Theorem~\ref{4000}.
\end{proof}

\proof[Proof of Theorem \ref{6000}]

Before establishing the uniform estimates, it is necessary to prove that $ u_{n,\ve}$ has the necessary smoothness for the calculations to be justified. For this we will repeatedly use \cite[Theorem 1.5, p.55]{GR}, which is a standard regularity result for the solution to a heat equation with right-hand side in $L^2$.

So, according to \cite[Theorem 1.5, p.55]{GR}, we see that for each $\ve,n$ the function $u_{n,\ve}$, solution of \eqref{318}, satisfies
\be
\label{453}
\aligned
u_{n,\ve} \in L^2 \big(0,T; H^2(\RR^d) \big), \quad \del_t u_{n,\ve} \in L^2(0,T; L^2(\RR^d)),
\endaligned
\ee
as long as $f$ is a $C^1$ function and the initial datum $u_0$ is in $H^1(\RR^d)$. This allows us to write the equation \eqref{318} in strong form,
\be
\label{500}
\aligned
&\del_t u_{n,\ve} + \dive f(u_{n,\ve}) - \ve \Delta u_{n,\ve} = n\, u_{n,\ve}\, \int_{\RR^d} (u_{n,\ve} - \theta)^+ \,dx
- n (u_{n,\ve} - \theta)^+,
\\
&u_{n,\ve}(0,x) = u_0(x).
\endaligned
\ee
We need further regularity for the time and space derivatives. Define $v = \del_t u_{n,\ve}$. Differentiate equation \eqref{500} in $t$ to get
\be
\label{502}
\aligned
\del_t v + \dive f'(u_{n,\ve}) v - {}&\ve \Delta v = n\, v \, \int_{\RR^d} (u_{n,\ve} - \theta)^+ \,dx
\\
& \quad+ n\, u \, \del_t\Big( \int_{\RR^d} (u_{n,\ve} - \theta)^+ \,dx\Big)
- n \del_t (u_{n,\ve} - \theta)^+.
\endaligned
\ee
Clearly, the first and third terms on the right-hand side are in $ L^2(0,T; L^2)$, in view of \eqref{453} and \eqref{400}. For the second term, recalling \eqref{446.5} and \eqref{448} gives 
\[
\aligned
\del_t\Big( \int_{\RR^d} (u_{n,\ve} - \theta)^+ \,dx\Big) \le C,
\endaligned
\]
while from \eqref{446} we find using \eqref{400} and the regularity of $\theta$
\[
\aligned
\del_t\Big( \int_{\RR^d} (u_{n,\ve} - \theta)^+ \,dx\Big) &\ge - n  \int_{\RR^d} (u_{n,\ve} - \theta)^+ \,dx
\\
& \quad- \int_{\RR^d} (H(\theta) - \ve\Delta \theta)\sgn_\delta(u_{n,\ve}-\theta)^+ \,dx
\\
& \ge - C_\theta.
\endaligned
\]
The previous two inequalities give $\Big| \del_t\Big( \int_{\RR^d} (u_{n,\ve} - \theta)^+ \,dx\Big) \Big| \le C$. Going back to \eqref{502}, we see that the right-hand side is in $ L^2(0,T; L^2)$. Applying \cite[Theorem 1.5, p.55]{GR}, we conclude that for each $n,\ve$
\be
\label{505}
\aligned
\del_t u_{n,\ve} \in L^2 \big(0,T; H^1(\RR^d) \big),
\endaligned
\ee
as long as $u_0 \in H^2(\RR^d)$ and $f\in (C^2(\RR))^d$.

Now we deduce some additional spatial regularity. Differentiate \eqref{500} in the direction $x_i$, $i=1,\dots,d$ to find
\be
\label{510}
\aligned
\del_t \del_i u_{n,\ve}  - \ve \Delta \del_i u_{n,\ve} &= n\, \del_i u_{n,\ve}\, \int_{\RR^d} (u_{n,\ve} - \theta)^+ \,dx
\\
& \quad- n \del_i (u_{n,\ve} - \theta)^+ 
- \dive (\del_i f(u)).
\endaligned
\ee
All the terms on the right-hand side are easily seen to be in $ L^2(0,T; L^2),$ from the regularity property \eqref{453}. Again invoking \cite[Theorem 1.5, p.55]{GR}, we see that (assuming $u_0 \in H^2(\RR^d)$)
\be
\label{515}
\aligned
u_{n,\ve} \in L^2 \big(0,T; H^3(\RR^d) \big).
\endaligned
\ee

We now have enough smoothness to rigorously proceed with the uniform estimates and prove Theorem~\ref{6000}.
We begin with a uniform estimate of $\|\nabla u_{n,\ve}\|_{L^1(\RR^d)}$. 
In the following calculation, for the sake of brevity, we omit the regularization parameter $\delta$ of the sign function.
Differentiate \eqref{500} in the direction $x_i$, $i=1,\dots,d$.
After summing and subtracting the appropriate terms, we find
\[
\aligned
&\del_t\del_i(u_{n,\ve} -\theta) +  \dive \big(f'(u_{n,\ve}) \del_i u_{n,\ve} - f'(u_{n,\ve}) \del_i \theta \big) -\ve \del_i \Delta (u_{n,\ve} - \theta ) 
\\
&\qquad=
n \, \del_i u_{n,\ve} \int_{\RR^d} (u_{n,\ve} - \theta)^+ \,dx 
- n \, \del_i(u_{n,\ve} -\theta)^+ + G,
\endaligned
\]
where
\be
\label{GG}
\aligned
G = - \del_t \del_i \theta - \dive \big( f'(u_{n,\ve}) \del_i \theta \big) - \ve \del_i \Delta \theta.
\endaligned
\ee
After multiplying by $\sgn(\del_i(u_{n,\ve} -\theta))$ and integrating on $\RR^d$ we find
\be
\label{516}
\aligned
\int_{\RR^d} \del_t |\del_i(u_{n,\ve} -{}&\theta)| \,dx =  - \int_{\RR^d}  \dive \big(f'(u_{n,\ve}) \del_i (u_{n,\ve} - \theta)  \big) \sgn\del_i(u_{n,\ve} -\theta) \,dx 
\\
&\quad +  \int_{\RR^d} \ve \del_i \Delta (u_{n,\ve} - \theta) \big) \sgn\del_i(u_{n,\ve} -\theta) \,dx
\\
&\quad+
n \Big( \int_{\RR^d} \del_i u_{n,\ve} \sgn\del_i(u_{n,\ve} -\theta) \,dx \Big) \Big( \int_{\RR^d} (u_{n,\ve} - \theta)^+ \,dx \Big)
\\
& \quad- n \int_{\RR^d} |\del_i(u_{n,\ve} -\theta)^+| \,dx + \int_{\RR^d} G \sgn \del_i(u_{n,\ve} -\theta) \,dx.
\endaligned
\ee
Consider the first two terms on the right-hand side. We integrate by parts and proceed as in the proof of estimates \eqref{I131} and \eqref{445.5}. They become using \eqref{MM}
\[
\aligned
&  \int_{\RR^d}   f'(u_{n,\ve}) \del_i (u_{n,\ve} - \theta)  \cdot \nabla \del_i(u_{n,\ve} -\theta) \sgn ' \del_i(u_{n,\ve} -\theta) \,dx 
\\
&\quad -  \int_{\RR^d} \ve | \nabla \del_i (u_{n,\ve} - \theta) |^2  \sgn ' \del_i(u_{n,\ve} -\theta) \,dx
\\
& \le \frac{M^2}{2\ve}  \int_{\RR^d}  ( \del_i (u_{n,\ve} - \theta))^2   \sgn '\del_i(u_{n,\ve} -\theta) \,dx 
\\
&\quad - \frac\ve2 \int_{\RR^d}  | \nabla \del_i (u_{n,\ve} - \theta) |^2  \sgn ' \del_i(u_{n,\ve} -\theta) \,dx.
\endaligned
\]
Now, as in  \eqref{I131} and \eqref{445.5}, but with $\del_i (u_{n,\ve} - \theta)$ instead of $ (u_{n,\ve} - \theta ) $, the first term above tends to zero as the regularization parameter of the sign function tends to zero, while the second term is nonpositive and so can be neglected. 

Going back to \eqref{516}, using the estimate \eqref{400}, the third term on the right-hand side is bounded by
\[
\aligned
\frac{C_\theta}\alpha \int_{\RR^d} | \del_i u_{n,\ve} | \,dx .
\endaligned
\]
The fourth term in \eqref{516} is nonpositive, while for the last one we have (recall \eqref{GG})
\be
\label{517}
\aligned
\int_{\RR^d} G \sgn \del_i(u_{n,\ve} -\theta) \,dx &\le 
\int_{\RR^d} |\del_t \del_i \theta |  +  |f''(u_{n,\ve})  \cdot \nabla u_{n,\ve} \del_i \theta| 
\\
& \quad+
| f'(u_{n,\ve}) \nabla \del_i \theta |  + \ve |\del_i \Delta \theta | \,dx
\\
&\le C_\theta \Big( M'  \int_{\RR^d} | \nabla u_{n,\ve}| \,dx + M + \ve +1 \Big) 
\\
& \le C_{\theta,f} \Big( \int_{\RR^d} | \nabla u_{n,\ve}| \,dx + 1 \Big)
\endaligned
\ee
for some $C_{\theta,f}$ depending on $\| \nabla_{t,x} \theta \|_{W^{2,1} ([0,+\infty) \times \RR^d)}$, $\| f' \|_\infty$ and $\| f'' \|_\infty$, but not on $\ve$.

Putting all the previous estimates together and integrating on $[0,t]$, we get from \eqref{516}
\[
\aligned
\int_{\RR^d}  |\del_i(u_{n,\ve} -\theta)| \,dx 
&\le
\int_{\RR^d}  |\del_i(u_0 -\theta(0))| \,dx 
\\
&\quad+
C_{\theta,f,\alpha} \int_0^t \Big(  \int_{\RR^d} | \nabla u_{n,\ve} | \,dx  + 1  \Big)\,dt ,
\endaligned
\]
where now $C_{\theta,f, \alpha} = (1 +  \frac1{\alpha}) C_{\theta,f}.$
Finally, writing $|\del_i u_{n,\ve}| \le |\del_i (u_{n,\ve}-\theta)| + |\del_i \theta|$ we find
\[
\aligned
\int_{\RR^d}  |\del_i u_{n,\ve}| \,dx 
&\le
\int_{\RR^d}  |\del_i u_0| \,dx +
C_{\theta,f,\alpha} \int_0^t \Big(  \int_{\RR^d} | \nabla u_{n,\ve} | \,dx  + 1  \Big)\,dt 
\\
&\quad + \int_{\RR^d} |\del_i\theta(0)| + |\del_i\theta|   \,dx.
\endaligned
\]
Since the last integral can be bounded by a constant $C_\theta(t)$ independent of $\ve$ or $n$, (recall the smoothness assumptions on $\theta$ in \eqref{50}) we find after applying Gronwall's inequality  that
\[
\aligned
\| \nabla u_{n,\ve}(t) \|_{L^1(\RR^d)} \le 
\big(\| \nabla u_0 \|_{L^1(\RR^d)} + C_\theta(t)\big) e^{tC},
\endaligned
\]
for some constant $C$ depending on $\theta$, $f'$, $f''$, $\alpha$ and $d$, but independent of $\ve$ and $n$. Thus,
\be
\label{520}
\aligned
\nabla u_{n,\ve} \in L^\infty(0,T; L^1(\RR^d))\qquad \text{uniformly in }n,\ve.
\endaligned
\ee

Next, we obtain a uniform estimate of $\| \del_t u_{n,\ve}(t)\|_{L^1(\RR^d)}$. Differentiate the equation \eqref{500} in $t$ (after adding and subtracting appropriate terms) to get
\[
\aligned
&\del_{tt}(u_{n,\ve} -\theta) +  \dive \big(f'(u_{n,\ve}) \del_t u_{n,\ve} - f'(u_{n,\ve}) \del_t \theta \big) -\ve \del_t \Delta (u_{n,\ve} - \theta ) 
\\
&\quad=
n \, \del_t u_{n,\ve} \int_{\RR^d} (u_{n,\ve} - \theta)^+ \,dx +
n \,  u_{n,\ve} \int_{\RR^d} \del_t (u_{n,\ve} - \theta)^+ \,dx 
\\
& \qquad - n \, \del_i(u_{n,\ve} -\theta)^+ + K,
\endaligned
\]
where
\be
\label{KK}
\aligned
K = - \del_{tt} \theta - \dive \big( f'(u_{n,\ve}) \del_t \theta \big) - \ve \del_t \Delta \theta.
\endaligned
\ee
Now multiply by $\sgn\del_t(u_{n,\ve} -\theta)$ and integrate on $\RR^d$. The flux and viscosity terms give a nonpositive contribution on the the right-hand side, exactly as in the previous estimate, so we omit their treatment. We find
\be
\label{525}
\aligned
\frac d{dt} \int_{\RR^d} | \del_t (u_{n,\ve} &-\theta)| \,dx \le
n \Big( \int_{\RR^d} \del_t u_{n,\ve} \sgn \del_t(u_{n,\ve} -\theta) \,dx \Big) \Big( \int_{\RR^d} (u_{n,\ve} - \theta)^+ \,dx \Big)
\\
&\quad
+n \Big( \int_{\RR^d} u_{n,\ve} \sgn \del_t(u_{n,\ve} -\theta) \,dx \Big) \Big(\int_{\RR^d} \del_t (u_{n,\ve} - \theta)^+ \,dx \Big)
\\
&\quad- n \int_{\RR^d} \del_t (u_{n,\ve} -\theta)^+  \sgn \del_t(u_{n,\ve} -\theta)\,dx
\\
&\quad+ \int_{\RR^d} |K | \,dx.
\endaligned
\ee
The first line on the right-hand side is estimated using \eqref{400},
\[
\aligned
n \Big( \int_{\RR^d} \del_t u_{n,\ve} \sgn \del_t(u_{n,\ve} -\theta) \,dx \Big) \Big( \int_{\RR^d} (u_{n,\ve} - \theta)^+ \,dx \Big) \le  \frac{C_\theta}\alpha \int_{\RR^d} |\del_t u_{n,\ve}| \,dx .
\endaligned
\]
Now we consider the second and third lines on the right-hand side of \eqref{525}. Using $\int_{\RR^d} u \sgn \del_t(u -\theta) \,dx \le 1$, $\int_{\RR^d} u_{n,\ve} \,dx =1$, and $\del_t v^+ = \sgn v^+ \del_tv$,  we find that these terms are bounded in the following way:
\[
\aligned
n  \int_{\RR^d} \del_t & (u_{n,\ve} - \theta)^+ \,dx 
- n \int_{\RR^d} \del_t (u_{n,\ve} -\theta)^+  \sgn \del_t(u_{n,\ve} -\theta)\,dx
\\
& = n \int_{\RR^d} \sgn (u_{n,\ve} -\theta)^+ \Big ( \del_t (u_{n,\ve} -\theta) - |\del_t (u_{n,\ve} -\theta)| \Big) \,dx
\\
& \quad \le 0.
\endaligned
\]
The last term in \eqref{525} is estimated exactly as in \eqref{517} to give
\[
\aligned
\int_{\RR^d} |K| \,dx  & \le C_{\theta,f} \Big( \int_{\RR^d} | \del_t u_{n,\ve}| \,dx + 1 \Big).
\endaligned
\]

Using the foregoing estimates, \eqref{525} becomes upon integration on $(0,t)$,
\[
\aligned
 \int_{\RR^d} | \del_t (u_{n,\ve} -\theta)| \,dx 
&\le
\int_{\RR^d}  |\del_t(u_0 -\theta(0))| \,dx 
\\
&\quad+
C_{\theta,f,\alpha} \int_0^t \Big(  \int_{\RR^d} | \del_t u_{n,\ve} | \,dx  + 1  \Big)\,dt ,
%
%
\endaligned
\]
with $C_{\theta,f, \alpha} = (1 +  \frac1{\alpha}) C_{\theta,f}.$
Therefore, we obtain
\[
\aligned
\int_{\RR^d} | \del_t u_{n,\ve}| \,dx &\le
\int_{\RR^d} | \del_t u_0 | \,dx +
C_{\theta,f,\alpha} \int_0^t \Big(  \int_{\RR^d} | \del_t u_{n,\ve} | \,dx  + 1  \Big)\,dt 
\\
& \quad+ \int_{\RR^d} |\del_t \theta|  + |\del_t \theta(0)| dx.
\endaligned
\]
Now, using the equation \eqref{500} one obtains
\be
\label{530}
\aligned
\int_{\RR^d} | \del_t u_0 | \,dx &\le M \| \nabla u_0 \|_{L^1(\RR^d)}
+ \ve \| \Delta u_0\|_{L^1(\RR^d)}.
\endaligned
\ee
As in \cite[p.68]{GR}, we consider a smoothing of $u_0$ such that 
$ \ve \| \Delta u_0\|_{L^1(\RR^d)} \le C \| \nabla u_0\|_{L^1(\RR^d)}$ for some universal constant depending only on the dimension $d$. Thus
\[
\aligned
\| \del_t u_{n,\ve} \|_{L^1(\RR^d)} &\le
C \| \nabla u_0\|_{L^1(\RR^d)} +
C_{\theta,f,\alpha} \int_0^t \| \del_t u_{n,\ve} \|_{L^1(\RR^d)}  + 1 \,dt 
\\
&\quad
+ \|\del_t \theta\|_{L^1(\RR^d)}  + \|\del_t \theta(0)\|_{L^1(\RR^d)}
\endaligned
\]
and so applying Gronwall's lemma gives
\[
\aligned
\| \del_t u_{n,\ve}(t) \|_{L^1(\RR^d)} &\le \big( C \| \nabla u_0\|_{L^1(\RR^d)} +
C_\theta(t) \big) e^{tC}.
\endaligned
\]
This concludes the proof of Theorem \ref{6000}.
\endproof


\section{Solvability of the obstacle-mass constraint problem}
\label{S5000}
In this section, we establish existence of an entropy solution for problem \eqref{010}--\eqref{012}, in the sense of Definition~\ref{DEFSOL}, by the vanishing viscosity method. 

%
%

\proof[Proof of Theorem \ref{TEO}]
1. First, for $\ve> 0$ and $n \in \NN$ we consider the nonlocal penalized viscous problem \eqref{315}, which we repeat here for convenience:
$$
\aligned
&\del_t u_{n,\ve} + \dive f(u_{n,\ve}) - \ve \Delta u_{n,\ve} = n\, u_{n,\ve}\, \int_{\RR^d} (u_{n,\ve} - \theta)^+
- n (u_{n,\ve} - \theta)^+,
\\
&u_{n,\ve}(0,x) = u_0(x).
\endaligned
$$
For $\vp \in C^\infty_c((-\infty,T) \times \RR^d)$ and $\eta$ an entropy (assumed $C^2$ without loss of generality), multiply 
\eqref{315} by $\vp \, \eta'(u_{n,\ve}- k \theta)$ and integrate in $(0,T) \times \RR^d =: \Pi_T$. We obtain
  $$
    \begin{aligned}
    &- \iint_{\Pi_T} \eta(u_{n,\ve}- k \theta) \, \vp_t \, \,dx \,dt + \iint_{\Pi_T} \eta'(u_{n,\ve}- k \theta) \, \vp \, \partial_t(k  \theta) \, \,dx \,dt
    \\[5pt]
    &-\iint_{\Pi_T}  \eta'(u_{n,\ve}- k \theta) \big( f(u_{n,\ve}) - f(k \theta) \big) \cdot \nabla \vp \, \,dx \,dt 
    \\[5pt]
    &+ \iint_{\Pi_T}  \eta'(u_{n,\ve}- k \theta) \, \vp  \, \dive f(k \theta) \, \,dx \,dt
    \\[5pt]
    & -\iint_{\Pi_T} \ve \, \Delta \eta(u_{n,\ve}- k \theta) \, \vp \, \,dx \,dt -\iint_{\Pi_T} \ve \, \Delta (k \theta) \,  \eta'(u_{n,\ve}- k \theta) \, \vp \, \,dx \,dt
    \\[5pt]
& - \int_{\RR^d} \eta( u_0(x) - k \,  \theta(0,x) ) \, \vp(0,x) \, dx
	\\[5pt]
    =& \iint_{\Pi_T}  \big( f(u_{n,\ve}) - f(k \theta) \big) \cdot \nabla \big(\eta'(u_{n,\ve}- k \theta) \big)  \,\vp   \,dx \,dt   
    \\[5pt]
    &-\iint_{\Pi_T} \ve \, \eta''(u_{n,\ve}- k \theta) \,  |\nabla(u_{n,\ve} - k \theta)|^2\, \vp \, \,dx \,dt 
    \\[5pt]
    &\quad + \iint_{\Pi_T}  \Big( n \, u_{n,\ve} \int (u_{n,\ve}-\theta)^+ \, dx - n (u_{n,\ve} - \theta)^+ \Big) \,  \eta'(u_{n,\ve}- k \theta) \, \vp \, \,dx \,dt. 
    \end{aligned}
$$  
Let $\eta(u)$ be an approximation (uniform on compact sets) of the Kruzkov entropy $|u|$. Then, similarly to the estimate \eqref{I13}, the first and second terms on the right-hand give nonpositive or vanishing contributions as we take the limit in that approximation (see \eqref{I13} for a totally similar procedure).
Thus,
neglecting the negative terms on the right-hand side, it follows that in the sense of distributions
\be
\label{650}
  \begin{aligned}
   \partial_t  |u_{n,\ve}- k \theta|  &+ \dive \Big( \sgn(u_{n,\ve}- k \theta) \big(f(u_{n,\ve}) - f(k \theta) \big)\Big) - \ve \Delta  |u_{n,\ve}- k \theta|
   \\[5pt]
   &\leq n \, u_{n,\ve} \,  \sgn(u_{n,\ve}- k \theta) \int_{\RR^d} (u_{n,\ve} - \theta)^+ \, dx 
   \\[5pt]
   & \qquad-  \sgn(u_{n,\ve}- k \theta)  \big( H(k \theta) - \ve \Delta(k \theta) \big),
   \end{aligned}
\ee
which incidentally motivates the precise formulation in Definition~\ref{DEFSOL}.

\medskip
2.  Now, we define for almost all $t \in (0,T)$, 
$$
\lambda_{n,\ve}(t) := n\int_{\RR^d} (u_{n,\ve}(t) - \theta(t))^+ \,dx.
$$ 
According to the estimate \eqref{400}, we have that $\lambda_{n,\ve}(t)$ is uniformly bounded for a.a.~$t \in (0,T)$. Thus (if necessary taking a subsequence), $\lambda_{n,\ve}(t)$ converges weak-star in $L^\infty(0,T)$ to some $\lambda(t)$ as $n \to \infty$, $\ve \to 0$.

\medskip
3. With the inequality \eqref{650} in hand, and the estimates collected in previous sections, it is a standard matter to pass to limit as $n,\ve \to 0$ and obtain an entropy solution. Indeed, using standard compactness results (see, e.g., the totally similar procedure in \cite[p.70]{GR}), the family $(u_{n,\ve})$ has a subsequence (which we do not relabel) converging a.e. on $\Pi_T$ and in $L^1_\loc((0,T)\times \RR^d)$ to some $u \in L^{\infty}((0,T)\times \RR^d)$. The gradient estimate in Theorem~\ref{6000} ensures that $u(t) \in BV(\RR^d)$ for a.a. $t \in (0,T)$. Note that Theorem~\ref{6000} requires that the initial datum is smooth enough, so we use a mollification of $u_0$ depending on $\ve$. The procedure to obtain $u_0$ in the limit is exactly the same as in \cite{GR}, so we omit it for the sake of simplicity.
 Moreover, from item 2 we see that the first term on the right-hand side of \eqref{650} converges to $ u \lambda(t) \eta'(u-k\theta)$ weak-star in  $L^\infty\big( (0,T)\times \RR^d \big)$, which is enough to pass to the limit on \eqref{650}. Thus $(u,\lambda)$ is a solution of problem~\eqref{010}--\eqref{012} according to Definition~\ref{DEFSOL}. This completes the proof of Theorem~\ref{TEO}.
\endproof


\section*{Appendix}
The goal of this appendix is to prove Lemma \ref{3700}, which is crucial in our analysis. For its proof, we require the Lemma~\ref{420}, which we prove first.
This is a modification of a classical comparison result comparing the solution of the nonlocal problem \eqref{315} with 
the solution of the homogenous conservation law  \eqref{404}.
The key point is that this comparison property is independent of $n$, and that the nonlocal terms do not influence the result.

\begin{lemma}
\label{420}
Let $u_{n,\ve}$ be a solution of \eqref{315}, and let $v_{\ve}$ be a solution to the Cauchy problem for the viscous 
homogenous conservation law \eqref{404}. Then, $u_{n,\ve} \ge v_{\ve}$. In particular, this comparison property holds for all $n$.
\end{lemma}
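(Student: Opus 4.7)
The plan is to subtract the two equations, test against a regularized $\sgn(v_\ve - u_{n,\ve})^+$, and derive a Gronwall-type estimate for $\int_{\RR^d}(v_\ve - u_{n,\ve})^+\,dx$. The flux and viscosity terms are handled exactly as in the proof of Theorem~\ref{3500}; the novelty lies in controlling the penalization and, especially, the nonlocal term in a way that is independent of $n$.

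As a preliminary step I would establish the pointwise bounds $0 \le v_\ve \le \theta$ a.e. Since $v_0 \le \underline{\theta}$ by Definition~\ref{COMP}(i) and the constant $\underline{\theta}$ is itself a solution of \eqref{404}, the standard Kruzkov comparison for the viscous conservation law gives $v_\ve \le \underline{\theta} \le \theta(t,x)$ a.e.; comparing $v_\ve$ with the zero solution similarly yields $v_\ve \ge 0$. Setting $z := v_\ve - u_{n,\ve}$ and $\Lambda(t) := \int_{\RR^d}(u_{n,\ve}(t) - \theta(t))^+\,dx \ge 0$, subtraction of \eqref{404} from \eqref{315} gives
\[
\del_t z + \dive\bigl(f(v_\ve) - f(u_{n,\ve})\bigr) - \ve \Delta z \;=\; -n\, u_{n,\ve}\,\Lambda(t) + n\,(u_{n,\ve} - \theta)^+.
\]

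I would then test with $\psi_\rho\,\sgn_\delta(z)^+$ and let $\delta \to 0$, $\rho \to \infty$ exactly as in \eqref{370}--\eqref{360}: the convective and viscous contributions vanish or are nonpositive. The key for the right-hand side is twofold. First, on the set $\{z>0\}$ one has $u_{n,\ve} < v_\ve \le \theta$, so $(u_{n,\ve} - \theta)^+ \equiv 0$ there and the penalization term disappears after multiplication by $\sgn(z)^+$. Second, for the nonlocal term I would use the algebraic decomposition $u_{n,\ve} = v_\ve - z$ to write
\[
-n\, u_{n,\ve}\,\Lambda(t)\,\sgn(z)^+ \;=\; n\,\Lambda(t)\,z^+ \;-\; n\,\Lambda(t)\,v_\ve\,\sgn(z)^+ \;\le\; n\,\Lambda(t)\,z^+,
\]
where the last inequality uses the preliminary bound $v_\ve \ge 0$.

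Collecting these estimates produces
\[
\frac{d}{dt}\int_{\RR^d} z^+(t)\,dx \;\le\; n\,\Lambda(t)\,\int_{\RR^d} z^+(t)\,dx,
\]
with $\Lambda \in L^\infty_\loc(0,T)$ by the regularity granted by Theorem~\ref{3500} (in fact, uniformly in $n$ a posteriori by \eqref{400}). Since Definition~\ref{COMP}(i) yields $v_0 \le u_0$, i.e.\ $z^+(0) \equiv 0$, Gronwall's lemma forces $z^+ \equiv 0$, so $u_{n,\ve} \ge v_\ve$ a.e. In particular, taking $v_0 = 0$ (hence $v_\ve \equiv 0$ since $f(0)=0$) recovers the nonnegativity of $u_{n,\ve}$ used in the proof of Theorem~\ref{3500}. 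The main obstacle is the nonlocal term: in its native form $-n\,u_{n,\ve}\,\Lambda\,\sgn(z)^+$ is not sign-definite, since $u_{n,\ve}$ could a priori be negative on $\{z>0\}$; only after the rewriting $u_{n,\ve}=v_\ve-z$ together with the two preliminary bounds $0\le v_\ve\le\theta$ does the estimate close with a constant independent of $n$.
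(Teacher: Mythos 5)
Your proof is correct and follows the same overall strategy as the paper (subtract the two equations, test with a positive-part cutoff on $z = v_\ve - u_{n,\ve}$, and apply Gronwall), but it departs in two genuine ways. First, you run an $L^1$-type estimate on $\int_{\RR^d} z^+\,dx$ by testing with $\sgn_\delta(z)^+\psi_\rho$, whereas the paper multiplies by $z^+$ itself and tracks $\int_{\RR^d}(z^+)^2\,dx$; the convective and viscous terms are handled analogously in both cases, the paper absorbing them via a weighted Young inequality into the Gronwall factor $M^2/\ve$, you sending them to zero via the $\delta\to 0$, $\rho\to\infty$ limits as in Section~\ref{S3000}. Second, and more substantively, your treatment of the nonlocal term is different: the paper simply discards $-n\Lambda(t)\int_{\RR^d} u_{n,\ve}\,z^+\,dx$ as nonpositive, which silently uses the already-established nonnegativity of $u_{n,\ve}$ (from Step~2 of the proof of Theorem~\ref{3500}); you instead decompose $u_{n,\ve} = v_\ve - z$ and bound that contribution by $n\Lambda(t)\int_{\RR^d} z^+\,dx$, which needs only the independently-obtained $v_\ve \ge 0$. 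Your version is therefore more self-contained: it does not require $u_{n,\ve}\ge 0$ as an input, and, as you observe, it even recovers $u_{n,\ve}\ge 0$ as a corollary upon taking $v_0\equiv 0$ (which is admissible since only condition~(i) of Definition~\ref{COMP}, not~(ii), is used in the comparison). The treatment of the penalization term — it vanishes on $\{z>0\}$ because there $u_{n,\ve} < v_\ve \le \theta$ by the maximum principle for \eqref{404} — is identical to the paper's. Both routes are valid and yield the $n$-independent comparison, but yours avoids the mild appearance of circularity in the paper's reliance on $u_{n,\ve}\ge 0$.
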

\begin{proof}[Proof of the lemma.]
We drop the subscripts $n,\ve$ from $u_{n,\ve}$ and $\ve$ from $v_{\ve}$ during the proof.
Subtract \eqref{315} from \eqref{404}, multiply by $(v_{\ve} - u_{n,\ve})^+$, and integrate on $\RR^d$ to get (with $w = v_{\ve} - u_{n,\ve}$)
\[
\aligned
&\frac{d}{dt} \int_{\RR^d} (w^+)^2 \,dx \le - \int_{\RR^d} \dive \big(f(v) - f(u) \big)w^+ \,dx
+ \ve \int_{\RR^d} \Delta w\, w^+ \,dx 
\\
&\quad 
- n \Big( \int_{\RR^d} u \, w^+ \,dx \Big) \Big( \int_{\RR^d} (u - \theta)^+ \,dx \Big)
+ n \int_{\RR^d} (u - \theta)^+ w^+ \,dx.
\endaligned
\]
Now note that by the compatibility condition of Definition~\ref{COMP}, $v_\ve $ has initial datum $v_0 < \underline{\theta}$, so by \eqref{50.2} and the maximum principle for the problem \eqref{404}, we have that 
 $v_{\ve} \le \theta$ (see \cite{GR}). Therefore, in the set where $u_{n,\ve} \ge \theta$, then necessarily $u_{n,\ve} > v_{\ve},$ or $w^+ =0.$ Thus we conclude that
$$
n \int_{\RR^d} (u - \theta)^+ w^+ \,dx = 0.
$$ 
Also neglecting the nonpositive term on the right-hand, we find
\[
\aligned
&\frac{d}{dt} \int_{\RR^d} (w^+)^2 \,dx \le - \int_{\RR^d} \dive \big(f(v) - f(u) \big)w^+ \,dx
+ \ve \int_{\RR^d} \Delta w\, w^+ \,dx .
\endaligned
\]
Using the Lipschitz condition on $f$ and integration by parts,
we have
\[
\aligned
\frac{d}{dt} \int_{\RR^d} (w^+)^2 \,dx &\le    \int_{\RR^d}  \big(f(v) - f(u) \big) \cdot\nabla w^+ \,dx
- \ve \int_{\RR^d} \nabla w \nabla w^+ \,dx 
\\
& \le  M \int_{\RR^d} |w| |\nabla w^+| \,dx - \ve \int_{\RR^d} |\nabla w^+|^2\,dx 
\\
& =   M \int_{\RR^d} |w^+| |\nabla w^+| \,dx - \ve \int_{\RR^d} |\nabla w^+|^2\,dx .
\endaligned
\]
Then, using a weighted Young inequality, we easily find
\[
\aligned
\frac{d}{dt}\int_{\RR^d} (w^+)^2 \,dx &\le     \frac{ M^2}{2\ve}  \int_{\RR^d} |w^+|^2 \,dx 
 - \frac{ \ve}{2} \int_{\RR^d} |\nabla w^+|^2\,dx .
\endaligned
\]
Integrating on $[0,t]$ for $t\le T$, and using Gronwall's lemma, we conclude that
$\int_{\RR^d} (w^+)^2 \,dx = 0$ and so $v\le u$ on $[0,T]$. This proves Lemma~\ref{420}.
\end{proof}


\subsection*{Proof of Lemma~\ref{3700}}
One recalls that, 
its motivation was discussed in Remark~\ref{R2200} above and it is used in the proof of Theorem~\ref{4000}.
We repeat the statement here for convenience.

\medskip	
\emph{Under the same conditions of Theorem~\ref{4000}, there exists a constant $\alpha >0$ depending on $T$, $u_0$ and $\theta$, but not on $n$ or $\ve$, such that the estimate \eqref{410} is valid, that is to say:}
$$
\aligned
\inf_{0\le t \le T} \int_{\{ u_{n,\ve}  < \theta\}} u_{n,\ve}(t)  \,dx \ge \alpha.
\endaligned
$$
\medskip

The idea of the proof is the following: as discussed in Section~\ref{SS}, Definition~\ref{COMP} is designed to ensure that the support of $ u_{n,\ve}$ always travels into regions where 
the integral of $\theta$ is greater than one. In view of this, and the fact (established in \eqref{397}) that the total mass of $ u_{n,\ve}$ is one, necessarily $ u_{n,\ve}$ cannot have all its nonzero values above $\theta$, otherwise compatibility in the sense of Definition~\ref{COMP} would be violated. Therefore, $ u_{n,\ve}$ must retain some mass below $\theta$, which is the claim in \eqref{410}. We now make precise this statement, using a contradiction argument.

1. 
Suppose \eqref{410} is false. Then, there are sequences $\ve_j \to 0,$ $t_j \in (0,T],$ $n_j \to \infty$, such that 
\be
\label{545}
\aligned
     \int_{\RR^d} u_j\chi_{\{ u_j < \theta\}} (t_j) \,dx <  \frac1j,
\endaligned
\ee
where $u_j \in L^1(\RR^d)$ is the solution $u_{n_j,\ve_j}$ of equation \eqref{315} (in the sense of Theorem~\ref{3500}), at time $t_j$ (so, $u_j(x) = u_{n_j,\ve_j}(t_j,x)$). Upon extraction of a subsequence (which here, and in what follows, we do not relabel), we may suppose $t_j \to t^*$ for some $t^* \in (0, T]$ as $j\to \infty$. Observe that due to Remark~\ref{2000} we ensure that $t^*>0$.
Thus, if we set $ w_j(x) := u_j(x) \chi_{\{ u_j(x) <\theta(t_j,x) \}} \in L^1(\RR^d)$, then \eqref{545} gives $ w_j\to 0$ as $j\to \infty$ in $L^1(\RR^d),$ since $u_j$ is nonnegative.

Let $v_j \in L^1(\RR^d)$ denote the (smooth) solution of the viscous problem \eqref{404} 
with viscosity parameter $\ve=\ve_j,$ at time $t_j$. That is, $v_j(x) = v_{\ve_j}(t_j,x)$ in \eqref{404}.
According to the comparison Lemma~\ref{420}, we have $v_j \le u_j$, and so $v_j  \chi_{\{ u_j <\theta(t_j) \}} \le w_j$ for a.e. $x \in \RR^d$. From $w_j \to 0$ in $L^1(\RR^d)$ we obtain 
\be
\label{547}
\aligned
v_j  \chi_{\{ u_j <\theta(t_j) \}} \to 0 \text{ in }L^1(\RR^d)
\endaligned
\ee
as $j\to \infty$.

\medskip
2.  We have $v_j(t_j) \to v(t^*)$ in $L^1(\RR^d)$ as $j\to \infty$, with $v$ solving \eqref{403}. Indeed, according to standard results concerning the vanishing viscosity approximation of hyperbolic conservation laws and the continuity in time of the viscous approximations (see, for instance, \cite{GR}), we have
\[
\aligned
\| v_j(t_j) - v(t^*)\|_{L^1(\RR^d)} \le \| v_j(t_j) - v_j(t^*)\|_{L^1(\RR^d)} + \| v_j(t^*) - v(t^*)\|_{L^1(\RR^d)} \to 0
\endaligned
\]
as $j\to \infty$.
Also, we have $ \chi_{\{ u_j(x) <\theta(t_j,x) \}} \ws \xi$ in 
$ L^\infty(\RR^d)$, for some $\xi\in L^\infty(\RR^d)$. Thus, \eqref{547} gives $v(t^*) \xi = 0$ a.e. on $\RR^d.$ Therefore, 
\be
\label{549}
\aligned
\xi =0 \text{ a.e. on }\{x\in \RR^d | v(x, t^*)>0\},
\endaligned
\ee 
which we abbreviate to $\{ v(t^*) >0\}$. 
Now, observe that a sequence of nonnegative functions weakly converging to zero also converges strongly in $L^1_\loc$.
Since $\chi_{\{ u_j <\theta(t_j) \}} \ge0$, we conclude from $\chi_{\{ u_j <\theta(t_j) \}} \ws \xi$ and \eqref{549} that actually 
$$
\chi_{\{ u_j <\theta(t_j) \}} \to 0 \text{ strongly in }L^1_\loc(\{v(t^*)>0\})
$$ 
and a.e.~on $\{v(t^*)>0\}$, as $j\to \infty$.

\medskip
3. Let $B_R$ denote the ball of radius $R>0$ centered on the origin. Let $\delta> 0$ to be chosen later. According to Egorov's Theorem, There exists a set $J_\delta \subset (\{v(t^*)>0\} \cap B_R)$ such that $|J_\delta| \le \delta$ and $\chi_{\{ u_j <\theta(t_j) \}} \to 0$ uniformly on $ V_\delta := (\{v(t^*)>0\} \cap B_R) \setminus J_\delta$ as $j\to \infty$. Since $\chi_{\{ u_j <\theta(t_j) \}}$ only takes the values 0 and 1, 
this means that for sufficiently large $j$, we must have $u_j(x) > \theta(t_j,x)$ a.e.~on $V_\delta$. Therefore,
\be
\label{550}
\aligned
\int_{V_\delta} u_j(x) \,dx > \int_{V_\delta} \theta(t_j,x) \,dx
=\int_{\{v(t^*)>0\} \cap B_R}\!\!\!\!\!\!\!\!\!\!\!\! \theta(t_j,x) \,dx - \int_{J_\delta} \theta(t_j,x) \,dx.
\endaligned 
\ee
Now, from the compatibility condition \eqref{1.200}, we deduce that for large enough $R$, 
$$
\int_{\{v(t^*)>0\} \cap B_R} \!\!\!\!\!\!\!\!\!\!\!\!\theta(t^*,x) \,dx > 1+ \beta / 2,
$$ 
and, by the $L^1$ continuity property \eqref{50.1},
$$
\int_{\{v(t^*)>0\} \cap B_R} \!\!\!\!\!\!\!\!\!\!\!\!\theta(t_j,x) \,dx > 1+ \beta / 2
$$ 
for sufficiently large $j$.
On the other hand, from Lebesgue's theorem and \eqref{50.1}, we see that since $\theta$ is locally integrable, we have $\int_{J_\delta} \theta(t^*,x) \,dx \to 0$ when $\delta \to 0$. Therefore, we choose $\delta$ small enough such that
\[
\aligned
\int_{J_\delta} \theta(t^*,x) \,dx \le \frac{\beta}{8}.
\endaligned
\]
Again using \eqref{50.1}, we find for sufficiently large $j$
\[
\aligned
\int_{J_\delta} \theta(t_j,x) \,dx &\le \int_{B_R} |\theta(t_j,x) - \theta(t^*,x)| \,dx 
+ \frac{\beta}{8}
\\
&\le \frac{\beta}{8} + \frac{\beta}{8} = \frac{\beta}{4}.
\endaligned
\]

We conclude from \eqref{550} and from the unit integral property \eqref{397}  that 
\[
\aligned
1 \ge \int_{V_\delta} u_j(x) \,dx > 1 + \frac\beta2 - \frac\beta4 = 1 + \frac\beta4,
\endaligned
\]
which is a contradiction. Thus \eqref{545} cannot hold and so \eqref{410} is proven. This concludes the proof of Lemma~\ref{3700}. \qed

\section{Acknowledgments}

Paulo Amorim was partially supported by FAPERJ grants no. APQ1 - 111.400/2014 and {\bf Jovem Cientista do Nosso Estado} grant no. 202.867/2015, and CNPq grant no. 442960/2014-0.

Wladimir Neves is partially supported by
CNPq through the grants
484529/2013-7,  308652/2013-4, 
and by FAPERJ, {\bf Cientista do Nosso Estado},
through the grant E-26/203.043/2015.

\small 
\newcommand{\auth}{\textsc}

\end{document}